\newcommand{\si}{\sigma}
\newtheorem{thm}{Theorem}[section]
\newtheorem{theorem}[thm]{Theorem}
\newtheorem{lemma}[thm]{Lemma}
\newtheorem{prop}[thm]{Proposition}
\newtheorem{rmk}[thm]{Remark}
\newtheorem{exm}[thm]{Example}
\title{Stability of hypersurface sections of quadric threefolds}
\author{Sangho Byun and Yongnam Lee}
\date{}
\address{Department of Mathematical Sciences, KAIST, 291 Daehak-ro, Yuseong-gu, Daejon 305-701, Korea}
\email{capqus@kaist.ac.kr}
\address{Department of Mathematical Sciences, KAIST, 291 Daehak-ro, Yuseong-gu, Daejon 305-701, Korea}
\email{ynlee@kaist.ac.kr}
\subjclass[2010]{Primary 14L24, Secondary 14J15}
\begin{document}
\maketitle
\begin{abstract} Let $S$ be a complete intersection of a smooth quadric 3-fold $Q$ and a hypersurface of degree $d$ in $\mathbb P^4$. In this paper we analyze GIT stability of $S$ with respect to the natural $G=SO(5, \mathbb C)$-action. We prove that if $d\ge 4$ and $S$ has at worst semi-log canonical singularities then $S$ is $G$-stable. Also,
we prove that if $d\ge 3$ and $S$ has at worst semi-log canonical singularities then $S$ is $G$-semistable.
\end{abstract}

\section{Introduction}
By the geometric invariant theory (GIT) analysis, Gieseker \cite{Gieseker} proved the existence of a quasiprojective coarse moduli space ${\mathcal M}_{K^2, \chi(\mathcal O_S)}$ for smooth projective surfaces $S$ of $K_S$ ample with fixed numerical invariants $K_S^2$ and $\chi(\mathcal O_S)$. In \cite{Gieseker}, he verified that $S$ is asymptotically Hilbert stable. With the result of Mabuchi \cite{Mabuchi}, both asymptotic Chow stability and asymptotic Hilbert stability coincide.

With the proof of bounds for log surfaces with given $K^2$, Alexeev \cite{Alexeev} clarified the construction of projective coarse moduli space of surfaces of general type with fixed $K^2$ that was started by Koll\'ar and Shepherd-Barron \cite{KSB}. The compactified moduli space, which is called KSBA compactification, should include (possibly reducible) surfaces with ordinary double curves and certain other mild singularities. These singularities are semi-log canonical singularities, and log canonical singularities for normal cases. We refer Definition 2.34 in \cite{KM} or Definition 4.17 in \cite{KSB} for these singularities. These surfaces are called smoothable stable surfaces.

However, this compactification is difficult to understand, and there is no description of it even relatively simple examples such as the quintic surfaces. Recently, there has been an approach \cite{Gallardo} to quintic surfaces via geometric invariant theory for describing GIT compactification and for comparing KSBA compactification with GIT compactification.

\medskip

Let $Q$ be the smooth quadric threefold in $\mathbb{P}^4$ defined by the equation
\[ x_0x_4+x_1x_3+x_2^2=0.\]
Since every nonsingular quadric hypersurface in $\mathbb{P}^4$ is projectively equivalent to $Q$, a complete intersection of a smooth quadric and a hypersurface of degree $d(d\ge3)$ can be identified with an element in $|\mathcal{O}_Q(d)|=\mathbb{P}(V)$, where $V$ is a vector space defined by the exact sequence
\[ 0\to H^0(\mathbb{P}^4,\mathcal{O}_{\mathbb{P}^4}(d-2))\to H^0(\mathbb{P}^4,\mathcal{O}_{\mathbb{P}^4}(d))\to V \to 0. \]

The automorphism group of $Q$ is isomorphic to the reductive group $G:=SO(5, \mathbb C)$. Let $S$ be a complete intersection of a quadric 3-fold and a hypersurfaces of degree $d$ in $\mathbb P^4$. The main portion of this paper is devoted to GIT stability analysis of $S$ induced by the $G$-action. Our GIT stability analysis makes us to compare a part of KSBA compactification with GIT compactification. The situation studied by us is special, and it does not help in understanding the general theory. But a comparative study on KSBA compactification and GIT compactification is just started. Moreover, GIT stability analysis of surfaces of general type is almost unknown except the beautiful result of Gieseker \cite{Gieseker}.

\medskip
In this paper, we precisely prove the following two theorems.
\begin{theorem}
Suppose $S$ is a complete intersection of a smooth quadric hypersurface and a hypersurface of degree $d$ in $\mathbb{P}^4$.
Suppose $d\ge 4$ and $S$ has at worst semi-log canonical singularities. Then $S$ is $G$-stable.
\end{theorem}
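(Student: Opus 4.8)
The plan is to apply the Hilbert--Mumford numerical criterion and argue by contraposition: if $S$ is not $G$-stable, then $S$ has a singularity worse than semi-log canonical. Since every one-parameter subgroup (1-PS) of $G=SO(5,\CC)$ is conjugate to one lying in a fixed maximal torus, and a torus preserving $q=x_0x_4+x_1x_3+x_2^2$ must satisfy $\mathrm{wt}(x_0)+\mathrm{wt}(x_4)=\mathrm{wt}(x_1)+\mathrm{wt}(x_3)=2\,\mathrm{wt}(x_2)=0$, it suffices to test 1-PS $\la$ acting on $(x_0,\dots,x_4)$ with weights $(a,b,0,-b,-a)$. Using the Weyl group of type $B_2$ (signed permutations of $(a,b)$) I would normalize to $a\ge b\ge 0$ with $a>0$. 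In the normalization where stability is equivalent to $\mu([F],\la)>0$ for all nontrivial $\la$, the relevant invariant is $\mu([F],\la)=\max\{a(m_0-m_4)+b(m_1-m_3)\}$, the maximum taken over the monomials $x^m=x_0^{m_0}\cdots x_4^{m_4}$ appearing in $F$; this quantity is independent of the chosen representative of $F$ modulo $q\cdot H^0(\Oh_{\PP^4}(d-2))$ because $q$ has pure weight $0$, equivalently because the restriction of $F$ to $Q$ is unchanged.

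Assume then that $S$ is not stable, so some normalized $\la$ gives $\mu([F],\la)\le 0$, i.e.\ every monomial of $F$ satisfies $a(m_0-m_4)+b(m_1-m_3)\le 0$. The highest-weight coordinate point $p_0=[1:0:0:0:0]$ lies on $Q$, and since the monomial $x_0^d$ has positive weight $ad$ it cannot occur in $F$; hence $p_0\in S$. I would choose local coordinates on the smooth threefold $Q$ near $p_0$ by setting $y_i=x_i/x_0$ for $i=1,2,3$ and eliminating $x_4/x_0=-(y_1y_3+y_2^2)$, so that $Q$ is locally $\CC^3$ and $S$ is cut out by a local equation $f(y_1,y_2,y_3)$. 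Assigning $(y_1,y_2,y_3)$ the weights $(a-b,a,a+b)$, a direct substitution shows that the image of each monomial $x^m$ has weighted order $ad-\big(a(m_0-m_4)+b(m_1-m_3)\big)\ge ad$; since cancellation can only raise the order, $f$ has weighted order $\ge ad$ at $p_0$.

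To finish, I would bound discrepancies via the weighted blow-up of $\CC^3$ with weights $w=(a-b,a,a+b)$, whose exceptional divisor $E$ has discrepancy $|w|-1=3a-1$ over the smooth threefold $Q$ and satisfies $\mathrm{ord}_E(f)\ge ad$. Log canonicity of the pair $(Q,S)$ along $E$ requires $\mathrm{ord}_E(f)\le |w|=3a$, that is $ad\le 3a$, hence $d\le 3$. For $d\ge 4$ this is a contradiction, so $(Q,S)$ is not log canonical; as $Q$ is smooth and $S$ is Cartier, inversion of adjunction then gives that $S$ is not semi-log canonical, contrary to hypothesis. The same mechanism with the strict inequality $\mu([F],\la)<0$ yields weighted order $\ge ad+1>3a$ whenever $d\ge 3$, which is exactly what one expects to drive the companion semistability statement.

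The main obstacle is the boundary case $a=b$, where the first weight $a-b$ vanishes and the weighted blow-up above is not admissible. In that case the $\la$-fixed locus meeting $p_0$ is the line $\ell_+=\{x_2=x_3=x_4=0\}$, which lies on $Q$, and the weight condition forces $f|_{\ell_+}\equiv 0$, so $\ell_+\subset S$ and the singularity is non-isolated. Here I would run the discrepancy estimate transversally to $\ell_+$, using the weights $(a,2a)$ on $(y_2,y_3)$ and the divisor over $Q$ centered along $\ell_+$, and invoke inversion of adjunction along the curve to detect the failure of semi-log canonicity. Carrying out this transverse computation cleanly, and separately checking that the degenerate case $b=0$ (isolated fixed point $p_0$, ordinary blow-up with weights $(a,a,a)$) causes no difficulty, is the part requiring the most care.
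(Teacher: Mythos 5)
Your argument is correct in substance but takes a genuinely different, and more uniform, route than the paper. The paper first classifies the maximal destabilizing monomial sets $M_{\le 0}(\la_{u,v})$ (reducing to $(u,v)=(1,0)$ or $u/v\le d-1$), then splits into cases according to whether the resulting $S$ is singular along the line $x_2=x_3=x_4=0$ or has an isolated singularity at $p_0$, and rules out semi-log canonicity case by case using the Koll\'ar--Shepherd-Barron classification of slc surface singularities (non-isolated singular points of multiplicity $\ge 3$, the pinch-point analysis for $d=4$, and a single explicit log canonical threshold computation with weight $(2,3,4)$ for the hardest subcase). You instead run one computation for all destabilizing $\la$: the identity that the monomial $x^m$ contributes weighted order $ad-W_\la(x^m)$ for the weights $(a-b,a,a+b)$ on $(y_1,y_2,y_3)$ is exactly right, and the resulting bound $\mathrm{lct}_{p_0}(f)\le 3a/(ad)=3/d<1$ for $d\ge4$ kills all cases at once, with the strict-inequality variant giving the $d=3$ semistability statement for free. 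What you lose is that you must invoke inversion of adjunction ($S$ slc $\Leftrightarrow$ $(Q,S)$ lc near $S$ for a reduced Cartier divisor in a smooth threefold) to handle the non-normal case, where the paper argues directly from the classification; that theorem is available in this dimension, so the step is legitimate, but you should cite it rather than treat it as automatic.

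Two smaller points. First, the case $a=b$ that you flag as the main obstacle is not actually one: the vector $(0,a,2a)$ still defines a divisorial (quasi-monomial) valuation over $Q$, centered along the line $\ell_+$, with log discrepancy $\sum w_i=3a$, and your order formula $ad-W_\la\ge ad$ holds verbatim for it; alternatively, perturbing to $(1,a,2a)$ after rescaling $(a,b)$ gives $\mathrm{ord}(f)\ge ad>3a+1$ for $d\ge4$. Either way the uniform argument closes this case, so the separate transverse analysis you sketch is unnecessary. Second, your claim that $\max_m\{a(m_0-m_4)+b(m_1-m_3)\}$ is independent of the representative of $F$ modulo $q\cdot H^0(\Oh_{\PP^4}(d-2))$ is false as stated (compare $x_1^d$ with $x_1^d+x_0^{d-2}q$); the correct statement is that $\mu([F],\la)$ is computed from the weight decomposition of $V$, i.e., from the representative supported on the basis $\mathcal{B}$ with $m_0m_4=0$. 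This is harmless for your proof, since non-stability gives you a representative all of whose monomials have $W_\la\le 0$, and the local equation $f$ on $Q$ is representative-independent, but the justification should be fixed.
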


\begin{theorem}
Suppose $S$ is a complete intersection of a smooth quadric hypersurface and a hypersurface of degree $d$ in $\mathbb{P}^4$.
Suppose $d\ge 3$ and $S$ has at worst semi-log canonical singularities. Then $S$ is $G$-semistable.
\end{theorem}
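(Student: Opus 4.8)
The plan is to prove the contrapositive via the Hilbert--Mumford numerical criterion: if $S=\{F=0\}\subset Q$ fails to be $G$-semistable, then $S$ must acquire a singularity worse than semi-log canonical. Since $G=SO(5,\CC)$ is reductive with a maximal torus $T$ of rank two, it suffices to test one-parameter subgroups conjugate into $T$. Because every $\la\in T$ must preserve the quadratic form $x_0x_4+x_1x_3+x_2^2$, its weights on $(x_0,\dots,x_4)$ are forced into the shape $(a,b,0,-b,-a)$; after applying the Weyl group (signed permutations fixing the form) I normalize to $a\ge b\ge 0$ with $a>0$. Reducing the degree-$d$ equation modulo $x_2^2=-(x_0x_4+x_1x_3)$, I fix a monomial basis of $V$ with $x_2$-degree at most one, so that the $\la$-weight of a monomial $x_0^{e_0}\cdots x_4^{e_4}$ is $a(e_0-e_4)+b(e_1-e_3)$, with no normalizing shift since the coordinate weights already sum to zero.

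First I would record the numerical criterion in these coordinates: $S$ is semistable exactly when, for every normalized $\la$, the reduced equation $F$ contains at least one monomial of nonpositive $\la$-weight; hence non-semistability produces a $\la$ for which \emph{every} monomial of $F$ has strictly positive weight. The geometric meaning of this condition is that $F$ vanishes to high order along the isotropic flag of $Q$ fixed by $\la$, anchored at $p=[0:0:0:0:1]\in Q$ (the point of most negative weight). I would then introduce local analytic coordinates on $Q$ near $p$ by solving for $x_4$ from the quadric, and read off from the surviving monomials the lowest-order terms of the local equation of $S$ at $p$.

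The heart of the argument is to show that the positivity-of-all-weights condition forces the local singularity of $S$ at $p$ (or along the $\la$-fixed line through $p$) to lie outside the semi-log canonical range. Concretely I would bound the multiplicity and the Newton polygon of the local equation in terms of $a$, $b$, and $d$, and compare against the slc criterion for surface singularities (equivalently, show that the relevant log canonical threshold drops below the admissible value). For the extremal destabilizing $\la$ the surface degenerates to a cone-like or non-reduced configuration whose singularity type can be named explicitly and checked to be non-slc. I expect this singularity computation to be the main obstacle: the passage from the combinatorics of admissible monomials to the precise analytic normal form, carried out uniformly over all $(a,b)$, must be organized as a finite case analysis indexed by the slope $b/a$, and the bookkeeping there is where the genuine difficulty lies.

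Finally, the threshold $d\ge 3$ (rather than $d\ge 4$, as for stability) enters precisely at the boundary one-parameter subgroups, where the minimal weight equals zero and $S$ is at worst strictly semistable. For semistability it is enough that $\mu(S,\la)\ge 0$ is never violated, so these borderline $\la$ need only be shown not to destabilize, and the slc hypothesis guarantees the critical zero-weight monomial is present. The sharp case $d=3$ is exactly where $K_S$ is trivial (so $S$ is a $K3$ surface) and strictly semistable configurations genuinely occur; this is why stability fails there while semistability persists, and I would treat these borderline configurations separately to confirm $\mu\ge 0$ rather than $\mu>0$.
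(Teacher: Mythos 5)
Your overall strategy coincides with the paper's: argue the contrapositive via Hilbert--Mumford, diagonalize the $1$-PS inside the maximal torus of $SO(5,\CC)$ with weights $(u,v,0,-v,-u)$, $u\ge v\ge 0$, and show that a destabilizing $\la$ forces a non-slc singularity at the fixed point of extreme weight or along the invariant line through it. (Two convention slips to fix: with your sign choice, all weights positive, the distinguished point is $[0{:}0{:}0{:}0{:}1]$ and you must solve the quadric for $x_0$, not $x_4$; and the basis of $V$ obtained by reducing modulo $x_2^2$ is less convenient than the paper's choice $a_0a_4=0$, which is adapted to the substitution $x_4=-x_2^2-x_1x_3$ used in the local computation.)

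The genuine gap is that you stop exactly where the proof begins: everything you label ``bookkeeping'' or ``the main obstacle'' is the actual content. Concretely, one needs (i) a reduction showing that every normalized $\la$ for which all monomials of $f$ have strictly negative weight is dominated by finitely many maximal ones --- in the paper, the sets $M_{<0}(\la_{u,v})$ with $u/v<d-1$ together with $(u,v)=(d,1)$; (ii) for slopes $<d-1$, the verification that every surviving monomial has $a_2+a_3+a_4\ge 2$, so $S$ is singular along the line $x_2=x_3=x_4=0$, with multiplicity at least $3$ there when $d\ge 4$, while for $d=3$ the single exceptional monomial $x_0x_3^2$ yields a double line that is not a pinch point --- non-slc in either case by the Koll\'ar--Shepherd-Barron classification; and (iii) for the extremal $(u,v)=(d,1)$, where the singularity is isolated, an explicit log canonical threshold bound: for $d=3$ the paper writes the general member of $M_{<0}(\la_{3,1})$, passes to the affine chart at $[1{:}0{:}0{:}0{:}0]$, assigns weights $(2,3,4)$ to the local coordinates, and obtains $c_0\le 9/10<1$ from Koll\'ar's weighted bound. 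Step (iii) is precisely what separates semistability ($d\ge 3$) from stability ($d\ge 4$); your heuristic that the slc hypothesis ``guarantees the critical zero-weight monomial is present'' does not engage with it, since the decisive case is a strictly negative-weight configuration with an isolated non-log-canonical singularity, not a borderline zero-weight one. Without (i)--(iii) the argument is a plausible plan rather than a proof.
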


A similar approach is done in cubic sections of a smooth quadric threefold \cite{LiTian}. Our GIT semistability of $S$ does not imply Chow semistability of $S$ in $\mathbb P^4$ (Example 2.13). If $S$ is a complete intersection defined by hypersurfaces with arbitrary degree in $\mathbb{P}^4$, then GIT stability analysis is hard to describe. By Theorem 1.5 in \cite{Ferretti} (cf. Theorem 1.1 in \cite{Sano}), a complete intersection of two stable (resp. semistable) hypersurfaces is stable (resp. semistable). But the main difficulty arises when one is not stable.

\medskip

We prove our main theorems using GIT stability analysis to understand the type of singularities when it is not stable or unstable. We also remark on strictly semistable points with minimal orbits. We generalize a part of contents in \cite{LiTian}. In this paper, we work on the field of complex numbers.

\bigskip

\section{Proof of Theorems}

Let $Q$ be the smooth quadric threefold in $\mathbb{P}^4$ defined by the equation
\[ x_0x_4+x_1x_3+x_2^2=0.\]
Since every nonsingular quadric hypersurface in $\mathbb{P}^4$ is projectively equivalent to $Q$, a complete intersection of a smooth quadric and a hypersurface of degree $d(d\ge3)$ can be identified with an element in $|\mathcal{O}_Q(d)|=\mathbb{P}(V)$, where $V$ is a vector space defined by the exact sequence
\[ 0\to H^0(\mathbb{P}^4,\mathcal{O}_{\mathbb{P}^4}(d-2))\to H^0(\mathbb{P}^4,\mathcal{O}_{\mathbb{P}^4}(d))\to V \to 0. \]

Take the set of monomials
\[ \mathcal{B}:=\{x_0^{a_0}x_1^{a_1}\dots x_4^{a_4}|\sum_{i=0}^4 a_i=d\,\, \text{and}\,\, a_0a_4=0\} \]
to be a basis of $V$. Since the automorphism group of $Q$ is isomorphic to $SO(5)$,
we can assume that the one parameter subgroups(1-PS) of $SO(5)$ are diagonalized and their weights are normalized to:
\begin{equation*}
\lambda_{u,v}=diag(t^u,t^v,1,t^{-v},t^{-u}) \quad\text{with}\quad u\ge v\ge 0
\end{equation*}
Then the weight of a monomial $x_0^{a_0}x_1^{a_1}\dots x_4^{a_4}\in \mathcal{B}$ with respect to $\lambda_{u,v}$ is
\begin{equation*}
W_{\lambda_{u,v}}(x_0^{a_0}x_1^{a_1}\dots x_4^{a_4})=(a_0-a_4)u+(a_1-a_3)v
\end{equation*}

Let \begin{equation*}
\begin{aligned}
M_{<0}(\lambda_{u,v})=\{x_0^{a_0}x_1^{a_1}\dots x_4^{a_4}\in \mathcal{B}|(a_0-a_4)u+(a_1-a_3)v<0\},\\
M_{\le 0}(\lambda_{u,v})=\{x_0^{a_0}x_1^{a_1}\dots x_4^{a_4}\in \mathcal{B}|(a_0-a_4)u+(a_1-a_3)v\le 0\}.
\end{aligned}
\end{equation*}

Due to the Hilbert-Mumford criterion \cite{Mumford} \cite{GIT}, an element $f(x_0,\dots,x_4)\in \mathbb{P}(V)$ is stable or
semistable if and only if the inequality
$\mu(\lambda, f ) > 0$ or $\mu(\lambda, f )\geq 0$, respectively,
holds for every non-trivial one parameter subgroup $\lambda: \mathbb G_m(\mathbb C)\to G$.
Now, for $\si\in G$, the formula
\[\mu(\si\cdot\lambda\cdot\si^{-1}, \si\cdot f)=\mu(\lambda, f) \]
holds true. This formula can be put $\lambda$ into some normalized form. Then, $f$
is stable or semistable, if
$\mu(\lambda, \si\cdot f) > 0$ or $\mu(\lambda, \si\cdot f) \geq 0$, respectively,
holds for every normalized one parameter subgroup $\lambda$ and every $\si\in G$.

\begin{lemma}\label{lemma2.1}
If $\lambda$ is any normalized 1-PS, then $M_{\le 0}(\lambda)$ is a subset of one of $M_{\le 0}(\lambda_{u,v})$, where $(u,v)=(1,0)$ or $\frac{u}{v}\le d-1$ for $v\neq 0$.
\end{lemma}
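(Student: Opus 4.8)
The plan is to exploit that $M_{\le 0}(\lambda_{u,v})$ depends only on the direction of the ray $(u,v)$, so that, writing $r=u/v$ when $v\neq 0$, the entire family is governed by the single parameter $r\in[1,\infty)$ together with the limiting case $v=0$. First I would dispose of the case $v=0$: there $\lambda_{u,0}$ is a positive multiple of $\lambda_{1,0}$, the weight of $x_0^{a_0}\cdots x_4^{a_4}$ is $(a_0-a_4)u$, and its sign depends only on $a_0-a_4$, so $M_{\le 0}(\lambda_{u,0})=M_{\le 0}(\lambda_{1,0})$ and we are reduced to the listed case $(u,v)=(1,0)$. Likewise, if $v\neq 0$ and $r=u/v\le d-1$, there is nothing to prove, since $\lambda_{u,v}$ is already of the required form. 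Thus the real content is the case $r>d-1$, where I claim $M_{\le 0}(\lambda_{u,v})\subseteq M_{\le 0}(\lambda_{d-1,1})$.

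To analyze this, set $p=a_0-a_4$ and $q=a_1-a_3$ for a monomial $x_0^{a_0}\cdots x_4^{a_4}\in\mathcal{B}$, so that its $\lambda_{u,v}$-weight is $pu+qv$. The defining condition $a_0a_4=0$ of $\mathcal{B}$ splits the monomials into three types according to the sign of $p$: either $a_4=0$ and $p=a_0\ge 0$, or $a_0=0$ and $p=-a_4\le 0$. I would treat these three sign cases separately, the decisive one being $p>0$.

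The key estimate is precisely for $p>0$. Here $a_4=0$ and $a_0=p\ge 1$, so $-q=a_3-a_1\le a_3\le d-a_0=d-p$, whence the crossing ratio satisfies $-q/p\le (d-p)/p\le d-1$. Consequently, when $r=u/v>d-1$ the weight $pu+qv=v\,p\,(r+q/p)$ is strictly positive, so no monomial with $p>0$ lies in $M_{\le 0}(\lambda_{u,v})$. For $p=0$ the weight reduces to $qv$, so membership in $M_{\le 0}(\lambda_{u,v})$ is exactly the condition $q\le 0$, which is identical to membership in $M_{\le 0}(\lambda_{d-1,1})$, whose weight there is $p(d-1)+q=q$. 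Finally, for $p<0$ we have $a_0=0$ and $a_4=-p\ge 1$, and the bound $a_1-a_3\le a_1\le d-a_4\le (d-1)a_4$ (using $a_4\ge 1$) shows that the $\lambda_{d-1,1}$-weight $p(d-1)+q=-(d-1)a_4+(a_1-a_3)$ is $\le 0$; hence every monomial with $p<0$ already lies in $M_{\le 0}(\lambda_{d-1,1})$, irrespective of $r$. Combining the three cases, for $r>d-1$ we obtain $M_{\le 0}(\lambda_{u,v})\subseteq\{p<0\}\cup\{p=0,\ q\le 0\}\subseteq M_{\le 0}(\lambda_{d-1,1})$, the desired containment.

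The main obstacle, and the crux of the argument, is the sharp bound $-q/p\le d-1$ for the $p>0$ monomials; everything hinges on the constraint $a_0a_4=0$ together with $\sum a_i=d$, and the extremal value $d-1$ is attained precisely at the monomial $x_0x_3^{\,d-1}$, for which the $\lambda_{d-1,1}$-weight is exactly $0$. Once this threshold is pinned down, the reduction to normalized 1-PS with $u/v\le d-1$ is immediate, and the remaining verifications for $p\le 0$ are routine sign checks.
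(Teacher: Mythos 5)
Your proof is correct and follows essentially the same route as the paper: the split by the sign of $p=a_0-a_4$ is exactly the paper's case division ($a_0\neq 0$; $a_0=a_4=0$; $a_4\ge 1$), and the key estimates ($a_3\le d-a_0$ to exclude $p>0$ monomials when $u/v>d-1$, and $a_1\le d-a_4\le(d-1)a_4$ to place all $p<0$ monomials in $M_{\le 0}(\lambda_{d-1,1})$) match the paper's. Your ratio bound $-q/p\le(d-p)/p\le d-1$ is stated slightly more sharply than the paper's, but the argument is the same.
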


\begin{proof}
Consider the case $v=0$, then we note that $M_{\le 0}(\lambda_{u,0})=M_{\le 0}(\lambda_{1,0})$.

Note that $a_0=0$ if $x_0^{a_0}x_1^{a_1}\dots x_4^{a_4} \in M_{\le 0}(\lambda_{u,v})$ with $v\neq 0$ and $\frac{u}{v}> d-1$. In fact, if $a_0\neq 0$ then $a_4=0$ and $a_3\le d-1$. Therefore $0\le a_1\le (d-1)+a_1-a_3 < a_0\frac{u}{v}+a_1-a_3=\frac{1}{v}W_{\lambda_{u,v}}(x_0^{a_0}x_1^{a_1}\dots x_4^{a_4})$.

\medskip
Now let $\lambda_{u,v}$ be a normalized 1-PS with $v\neq 0$ and $\frac{u}{v}> d-1$. Take any $x_1^{a_1}x_2^{a_2}x_3^{a_3}x_4^{a_4} \in M_{\le 0}(\lambda_{u,v})$.

If $a_4=0$, $a_1\le a_3$ and it is in $M_{\le 0}(\lambda_{d-1,1})$.

If $a_4\ge 1$ then
\begin{equation*}
\begin{aligned}
\frac{1}{v}W_{\lambda_{u,v}}(x_1^{a_1}x_2^{a_2}x_3^{a_3}x_4^{a_4})&=-a_4\frac{u}{v}+a_1-a_3\\
&<-a_4(d-1)+a_1-a_3\\
&\le -(d-1)+a_1-a_3\\
&\le -(d-1)+(d-1)-a_3 \quad(\text{because}\, \, \, a_1\le d-1)\\
&\le 0.
\end{aligned}
\end{equation*}
So all $x_1^{a_1}x_2^{a_2}x_3^{a_3}x_4^{a_4}\in \mathcal{B}$ with $a_4\ge 1$ is in $M_{\le 0}(\lambda_{u,v})$. And it is also in $M_{\le 0}(\lambda_{d-1,1})$ because $W_{\lambda_{d-1,1}}(x_1^{a_1}x_2^{a_2}x_3^{a_3}x_4^{a_4})=-a_4(d-1)+a_1-a_3\le 0$.
In all, $M_{\le 0}(\lambda_{u,v})\subset M_{\le 0}(\lambda_{d-1,1})$.
\end{proof}

\begin{lemma}\label{lemma2.2}
If $\lambda$ is any normalized 1-PS, then $M_{<0}(\lambda)$ is a subset of one of $M_{<0}(\lambda_{u,v})$, where  $v\neq 0$ and $\frac{u}{v}<d-1$ or $\frac{u}{v}=d$.
\end{lemma}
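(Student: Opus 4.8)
The plan is to follow the pattern of the proof of Lemma \ref{lemma2.1}, reducing every normalized $1$-PS to one of the two kinds of $\lambda_{u,v}$ allowed in the statement. After normalization the given $\lambda$ is $\lambda_{u,v}$ with $u\ge v\ge 0$, and the weight of a monomial $x_0^{a_0}\cdots x_4^{a_4}\in\mathcal B$ is $(a_0-a_4)u+(a_1-a_3)v$. If $v\neq 0$ and $\tfrac{u}{v}<d-1$, then $\lambda$ already has the required form and serves as its own target, so the entire content of the lemma lies in the complementary case $v=0$ or $\tfrac{u}{v}\ge d-1$. In that case I would prove the single containment $M_{<0}(\lambda)\subset M_{<0}(\lambda_{d,1})$, where $\lambda_{d,1}=\mathrm{diag}(t^{d},t,1,t^{-1},t^{-d})$ has ratio $d$.

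The heart of the argument is a monomial-by-monomial check organized by the sign of $a_0-a_4$ (recall $a_0a_4=0$ on $\mathcal B$). First, suppose $a_0\ge 1$, so $a_4=0$ and $a_1+a_3\le d-a_0$, whence $a_1-a_3\ge a_0-d$. Using $u\ge(d-1)v$ one estimates
\[
(a_0-a_4)u+(a_1-a_3)v\ge a_0u+(a_0-d)v=a_0(u+v)-dv\ge a_0\,dv-dv=(a_0-1)dv\ge 0
\]
(and the weight is simply $a_0u>0$ when $v=0$), so no such monomial lies in $M_{<0}(\lambda)$ and there is nothing to verify. Second, if $a_0=a_4=0$, then membership in $M_{<0}(\lambda)$ forces $v\neq 0$ and $a_1<a_3$, and then $W_{\lambda_{d,1}}=a_1-a_3<0$ places the monomial in $M_{<0}(\lambda_{d,1})$. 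Finally, if $a_4\ge 1$, then $a_0=0$ and $a_1-a_3\le a_1\le d-a_4$, so
\[
W_{\lambda_{d,1}}(x_0^{a_0}\cdots x_4^{a_4})=-a_4d+(a_1-a_3)\le -a_4d+(d-a_4)=-a_4(d+1)+d\le -1<0,
\]
so every monomial with $a_4\ge 1$ lies in $M_{<0}(\lambda_{d,1})$ unconditionally. Combining the three cases yields $M_{<0}(\lambda)\subset M_{<0}(\lambda_{d,1})$.

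The point that needs care — and the reason the admissible upper shape is $\tfrac{u}{v}=d$ rather than $\tfrac{u}{v}=d-1$, in contrast with $M_{\le 0}$ in Lemma \ref{lemma2.1} — is the boundary monomial $x_1^{d-1}x_4$, for which $(a_0-a_4,a_1-a_3)=(-1,d-1)$ and the weight equals $(d-1)v-u$. For $\tfrac{u}{v}>d-1$ (in particular for $v=0$) this monomial lies in $M_{<0}(\lambda)$, but for every target with $\tfrac{u}{v}<d-1$ its weight is strictly positive, so no such target can contain $M_{<0}(\lambda)$; and at $\tfrac{u}{v}=d-1$ its weight vanishes, which is why the strict set $M_{<0}$ cannot tolerate that ratio either. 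Choosing the representative ratio $d$ resolves this, since $d-1$ is the largest ratio at which any monomial weight can vanish and hence every value exceeding $d-1$ gives the same set $M_{<0}$. I expect the only real care to be needed in the first subcase, namely in checking that the constraint $a_1-a_3\ge a_0-d$ together with $\tfrac{u}{v}\ge d-1$ excludes every $a_0\ge 1$ monomial from $M_{<0}(\lambda)$.
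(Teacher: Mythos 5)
Your proposal is correct and follows essentially the same route as the paper: both reduce every $\lambda$ with $v=0$ or $\tfrac{u}{v}\ge d-1$ to the single target $\lambda_{d,1}$ by first showing that monomials with $a_0\ge 1$ have nonnegative weight (so are excluded from $M_{<0}(\lambda)$) and then checking that the remaining monomials land in $M_{<0}(\lambda_{d,1})$. Your three-way split on $(a_0,a_4)$ merely repackages the paper's subcases ($v=0$, the monotonicity step for $d-1\le\tfrac{u}{v}\le d$, and $\tfrac{u}{v}>d$) into one uniform verification, and all the inequalities check out.
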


\begin{proof}
Note that $a_0=0$ if $x_0^{a_0}x_1^{a_1}\dots x_4^{a_4} \in M_{< 0}(\lambda_{u,v})$ with $\frac{u}{v}\ge d-1$. In fact, if $a_0\neq 0$ then $a_4=0$ and $a_3\le d-1$. Therefore $0\le a_1\le (d-1)+a_1-a_3 \le a_0\frac{u}{v}+a_1-a_3=\frac{1}{v}W_{\lambda_{u,v}}(x_0^{a_0}x_1^{a_1}\dots x_4^{a_4})$.

If $x_0^{a_0}x_1^{a_1}\dots x_4^{a_4}\in M_{<0}(\lambda_{1,0})$, then $a_0 < a_4$. So $a_0=0$ and $a_4\neq 0$. Then
\begin{equation*}
\begin{aligned}
W_{\lambda_{d,1}}(x_1^{a_1}\dots x_4^{a_4})&=-a_4d+a_1-a_3\\
&\le -d+a_1-a_3\\
&\le -d+(d-1)-a_3 \quad(\text{because}\,\,\, a_1\le d-1)\\
&=-a_3-1<0
\end{aligned}
\end{equation*}
Thus $M_{<0}(\lambda_{1,0})\subset M_{<0}(\lambda_{d,1})$.

For $d-1\le \frac{u}{v}\le \frac{u'}{v'}$, we have $M_{<0}(\lambda_{u,v})\subset M_{<0}(\lambda_{u',v'})$ because
\[\begin{array}{c}
\frac{1}{v'}W_{\lambda_{u',v'}}(x_1^{a_1}\dots x_4^{a_4})=-a_4\frac{u'}{v'}+a_1-a_3\\
\le -a_4\frac{u}{v}+a_1-a_3=\frac{1}{v}W_{\lambda_{u,v}}(x_1^{a_1}\dots x_4^{a_4}).
\end{array}\]
Thus $M_{<0}(\lambda_{u,v})\subset M_{<0}(\lambda_{d,1})$ for $d-1\le \frac{u}{v}\le d$.

\medskip
Now we assume that $\lambda_{u,v}$ is a normalized 1-PS with $v\neq 0$ and $\frac{u}{v}> d$.
We take any $x_1^{a_1}x_2^{a_2}x_3^{a_3}x_4^{a_4} \in M_{<0}(\lambda_{u,v})$.

If $a_4=0$, then $a_1<a_3$ and it is in $M_{<0}(\lambda_{d,1})$.

If $a_4\ge 1$ then
\begin{equation*}
\begin{aligned}
\frac{1}{v}W_{\lambda_{u,v}}(x_1^{a_1}x_2^{a_2}x_3^{a_3}x_4^{a_4})&=-a_4\frac{u}{v}+a_1-a_3\\
&<-a_4d+a_1-a_3\\
&\le -d+a_1-a_3\\
&\le -d+(d-1)-a_3 \quad(\text{because}\,\,\,a_1\le d-1)\\
&<0.
\end{aligned}
\end{equation*}
So all $x_0^{a_0}x_1^{a_1}\dots x_4^{a_4}\in \mathcal{B}$ with $a_4\ge 1$ is in $M_{<0}(\lambda_{u,v})$. And it is also in $M_{<0}(\lambda_{d,1})$ because $W_{\lambda_{d,1}}(x_0^{a_0}x_1^{a_1}\dots x_4^{a_4})=-a_4d+a_1-a_3<0$. Thus $M_{< 0}(\lambda_{u,v})\subset M_{< 0}(\lambda_{d,1})$.
\end{proof}

\begin{lemma}\label{lemma2.3}
Let $S=Q\cap Y$ for some degree $d$ hypersurface $Y\subset \mathbb{P}(V)$ defined by $f$.
Suppose $f$ is a general form whose all monomials in $f$ are contained in one of the maximal subsets $M_{\le 0}(\lambda_{u,v})$ in Lemma~\ref{lemma2.1}.

Then $S$ is singular along a line $L:x_2=x_3=x_4=0$ if $\frac{u}{v}<d-1$ for $v\neq 0$ and $S$ has an isolated singularity if $(u,v)=(1,0)$ or $(u,v)=(d-1,1)$.
\end{lemma}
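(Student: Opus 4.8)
The plan is to apply the Jacobian criterion to the complete intersection $S=Q\cap Y$ in $\PP^4$, using $\nabla Q=(x_4,x_3,2x_2,x_1,x_0)$: a point $p\in S$ is singular exactly when $\nabla Q(p)$ and $\nabla f(p)$ are linearly dependent. Everything then reduces to controlling which monomials occur in $f$. Since $f$ is supported on $M_{\le 0}(\lambda_{u,v})$, I only need to locate the monomials whose restriction, or whose partial derivative, survives along the line $L=\{x_2=x_3=x_4=0\}$ or at the point $P_0=[1:0:0:0:0]$, and then decide, by a short weight computation, whether that monomial is admissible.

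For the line case ($v\neq0$, $\frac uv<d-1$) I would first check $L\subset S$: the only monomials restricting nontrivially to $L$ are the $x_0^{a_0}x_1^{a_1}$ with $a_0+a_1=d$, all of positive weight when $u\ge v>0$, hence absent from $f$, so $f|_L\equiv0$, while $L\subset Q$ is automatic. Next I would compute $\nabla f|_L$: the only monomials contributing to $\partial_i f$ along $L$ are $x_0^{a_0}x_1^{a_1}$, $x_0^{a_0}x_1^{a_1}x_2$, $x_0^{a_0}x_1^{a_1}x_3$ and $x_1^{d-1}x_4$, and a direct estimate (using $d\ge3$ and $\frac uv<d-1$) shows each has strictly positive weight, so $\nabla f|_L\equiv0$. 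Since $\nabla Q|_L=(0,0,0,x_1,x_0)\neq0$, the Jacobian has rank $1$ along $L$, so $S$ is singular along $L$. The strict inequality $\frac uv<d-1$ is exactly what keeps $x_1^{d-1}x_4$ out of $f$.

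For the isolated cases $(u,v)=(1,0)$ or $(d-1,1)$ I would first show $P_0$ is a singular point by passing to the chart $x_0=1$, solving $Q$ for $x_4=-x_1x_3-x_2^2$, and reading off the local equation $\bar f(x_1,x_2,x_3)=f(1,x_1,x_2,x_3,-x_1x_3-x_2^2)$. The candidate constant and linear monomials are $x_0^d$ and $x_0^{d-1}x_i$ ($i=1,2,3$); all have positive weight for these $\lambda$, hence are absent, so $\bar f$ vanishes to order $\ge2$ and $P_0$ is singular. For isolatedness I would first see that $P_0$ is the only singular point of $S$ on $L$: for $(d-1,1)$ the now-admissible monomial $x_1^{d-1}x_4$ (weight $0$) gives $\nabla f|_L=(0,0,0,0,c\,x_1^{d-1})$ with $c\neq0$, proportional to $\nabla Q|_L=(0,0,0,x_1,x_0)$ only when $x_1=0$; for $(1,0)$ one has $L\cap S=\{P_0\}$ directly. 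Off $L$ I would run an incidence-variety argument.

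The incidence-variety step is where the real work lies. Let $\mathcal P\subset\PP(V)$ be the linear system of forms supported on $M_{\le0}(\lambda)$ and set $\mathcal I=\{(f,p): p\in Q\setminus\{P_0\},\ S_f\text{ singular at }p\}$. Fixing $p$, singularity at $p$ forces the value and the three tangential derivatives of $f|_Q$ to vanish, i.e.\ at most four linear conditions on $\mathcal P$; if these are independent for general $p$, then $\dim\mathcal I\le 3+(\dim\mathcal P-4)<\dim\mathcal P$, so the general $f$ acquires no singular point away from $P_0$, giving $\mathrm{Sing}(S)=\{P_0\}$. The main obstacle is precisely this independence, equivalently that the sub-linear system $|M_{\le0}(\lambda)|$ separates $1$-jets at every point of $Q\setminus(L\cup\{P_0\})$, together with controlling the coordinate strata of $Q$ where fewer monomials survive: there the conditions can degenerate and must be handled by hand from the explicit description of $M_{\le0}(\lambda)$.
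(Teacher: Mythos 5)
Your treatment of the line case is correct and is essentially the paper's argument in different clothing: the paper shows that for $v\neq 0$ and $\frac{u}{v}<d-1$ every admissible monomial has $a_2+a_3+a_4\ge 2$ (by ruling out the three shapes with $a_0+a_1\ge d-1$ via the weight inequalities), which is exactly your statement that $f$ and all of $\nabla f$ vanish along $L$; the rank-one Jacobian conclusion is then the same. Your identification of the singular point $p_0$ and your explicit check that $p_0$ is the only singular point of $S$ \emph{on} $L$ in the case $(u,v)=(d-1,1)$ (via the minor $c\,x_1^{d}$ of the $2\times 5$ Jacobian) are also correct, and in fact more detailed than what the paper writes, since the paper merely lists the weight-zero monomials of $M_{\le 0}(\lambda_{1,0})$ and $M_{\le 0}(\lambda_{d-1,1})$ and asserts the isolated singularity at $p_0$.

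The genuine gap is the ``off $L$'' step. Your incidence-variety count needs the locus of $f\in\mathcal{P}$ singular at a fixed $p$ to have codimension $\ge 4$, i.e.\ the four linear conditions (value plus three tangential derivatives) to be independent, and you explicitly do not verify this; on coordinate strata of $Q$ it genuinely can fail, so as written the dimension count does not close. But you do not need jet separation at all: since $f$ is a \emph{general} element of the linear span of $M_{\le 0}(\lambda_{u,v})$, the classical Bertini theorem in characteristic zero says the general member of this linear system on the smooth threefold $Q$ is smooth away from its base locus. For $(u,v)=(1,0)$ the system contains $x_1^d,x_2^d,x_3^d,x_4^d$, so the base locus on $Q$ is exactly $\{p_0\}$ and you are done. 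For $(u,v)=(d-1,1)$ the system contains $x_2^d,x_3^d,x_4^d$ but every member vanishes on $L$, so the base locus is exactly $L$; Bertini gives smoothness off $L$, and your explicit gradient computation on $L$ (only $x_1^{d-1}x_4$ and $x_0x_3^{d-1}$ contribute, and the relevant minor is $c\,x_1^{d}$) shows the only singular point on $L$ is $p_0$. Replacing the unproved independence claim by this base-locus-plus-Bertini argument completes your proof; without that replacement the proposal is incomplete precisely where you flagged it.
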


\begin{proof}
Suppose that all monomials in $f$ are contained in some maximal subset $M_{\le 0}(\lambda_{u,v})$ with $v\neq 0$, that is $W_{\lambda_{u,v}}(x_0^{a_0}x_1^{a_1}\dots x_4^{a_4})=(a_0-a_4)u+(a_1-a_3)v\le 0$ for all monomials $x_0^{a_0}x_1^{a_1}\dots x_4^{a_4}$ of $f$.

Suppose $x_0^{a_0}x_1^{a_1}\dots x_4^{a_4}\in M_{\le 0}(\lambda_{u,v})$ with $a_0+a_1\ge d-1$.

If $a_0+a_1=d$, then $a_2=a_3=a_4=0$ and so $W_{\lambda_{u,v}}(x_0^{a_0}x_1^{a_1}\dots x_4^{a_4})=a_0u+a_1v>0$.

If $a_0+a_1=d-1$, then
\begin{align}
&a_3=1 \quad\text{and}\quad a_4=0 \Longrightarrow a_0u+(a_1-1)v\le 0\\
&a_3=0 \quad\text{and}\quad a_4=1 \Longrightarrow (a_0-1)u+a_1v\le 0\\
&a_3=0 \quad\text{and}\quad a_4=0 \Longrightarrow a_0u+a_1v\le 0, \quad\text{we get a contradiction.}
\end{align}

Since $u\ge v\ge 0$, (1) and (2) have the minimum when $a_0=0, a_1=d-1$ and the minimum value is $(d-2)v$ for (1), and $-u+(d-1)v$ for (2). Since $0<(d-2)v$, we must have $d-1 \le \frac{u}{v}$. Therefore, if $(u,v)\neq (d-1,1)$, then every monomial $x_0^{a_0}x_1^{a_1}\dots x_4^{a_4}$ in $f$ has $a_2+a_3+a_4\ge 2$. Hence $S$ is singular along a line $L$.

We can easily compute the followings.
\begin{equation*}
M_{\le 0}(\lambda_{1,0})=\{x_1^{a_1}x_2^{a_2}x_3^{a_3}x_4^{a_4}|\sum_{i=1}^4 a_i=d\}
\end{equation*}
and $M_{\le 0}(\lambda_{d-1,1})$ has monomials with the maximal weight zero:
\[x_0x_3^{d-1},x_1^{d-1}x_4,x_2^d,x_1x_2^{d-2}x_3,\dots,x_1^{c}x_3^{c}\,\, (\text{resp.}\,\, x_1^{c}x_2x_3^{c})\]
where $d=2c$ (resp. $d=2c+1$).
So $S$ has an isolated singularity at $p_0=[1,0,0,0,0]$ if $(u,v)=(1,0)$ or $(u,v)=(d-1,1)$.
\end{proof}

\begin{lemma}\label{lemma2.4}
Let $S=Q\cap Y$ for some degree $d$ hypersurface $Y\subset \mathbb{P}(V)$ defined by $f$.
Suppose $f$ is a general form whose all monomials in $f$ are contained in one of the maximal subsets $M_{<0}(\lambda_{u,v})$ in Lemma~\ref{lemma2.2}.

Then $S$ is singular along a line $L:x_2=x_3=x_4=0$ if $(u,v)\neq (d,1)$ and $S$ has an isolated singularity if $(u,v)=(d,1)$.
\end{lemma}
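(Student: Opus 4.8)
The plan is to follow the shape of Lemma~\ref{lemma2.3}, working throughout with the Jacobian criterion for the complete intersection $S=Q\cap Y$: a point $p\in S$ is singular exactly when the gradients $\nabla Q(p)$ and $\nabla f(p)$ are linearly dependent. Since the monomials of the maximal subsets of Lemma~\ref{lemma2.2} all have $v\neq 0$, I may assume $u\ge v>0$. The behaviour along the line $L\colon x_2=x_3=x_4=0$ is governed entirely by $\nabla f$, because there $\nabla Q=(x_4,x_3,2x_2,x_1,x_0)=(0,0,0,x_1,x_0)$ is a rank-one vector at every point of $L$.

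First I would dispose of the case $v\neq 0$, $\frac{u}{v}<d-1$. As in Lemma~\ref{lemma2.3} I claim every monomial of $M_{<0}(\lambda_{u,v})$ has $a_0+a_1\le d-2$. If $a_0+a_1=d$ the weight is $a_0u+a_1v>0$. If $a_0+a_1=d-1$, I check the three possibilities for $(a_2,a_3,a_4)$: the monomial with $a_2=1$ has weight $a_0u+a_1v>0$; the one with $a_3=1$ has weight $a_0(u-v)+(d-2)v\ge(d-2)v>0$ because $u\ge v>0$ and $d\ge 3$; and the one with $a_4=1$ (which forces $a_0=0$, i.e. the monomial $x_1^{d-1}x_4$) has weight $-u+(d-1)v$, which is strictly positive precisely because $\frac{u}{v}<d-1$. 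Hence none of these lies in $M_{<0}$, so every monomial of $f$ satisfies $a_2+a_3+a_4\ge 2$. Then $f$ and all its first partials vanish identically on $L$, so $\nabla f|_L\equiv 0$ and $S$ is singular along $L$.

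Next I would treat $(u,v)=(d,1)$. A direct computation shows that $M_{<0}(\lambda_{d,1})$ consists of the monomials with $a_0=0$ for which either $a_4\ge 1$, or $a_4=0$ and $a_1<a_3$. From this description the base locus on $Q$ of the spanned linear system is cut out by $x_3=x_4=0$, which meets $Q$ (where $x_2^2=0$) exactly in $L$; in particular $L\subset S$ for every such $f$. The decisive difference from the previous case is that now $x_1^{d-1}x_4\in M_{<0}(\lambda_{d,1})$, so for general $f$ its coefficient $c$ is nonzero, and at a point $[s,t,0,0,0]\in L$ one finds $\nabla f=(0,0,0,0,ct^{d-1})$. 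Together with $\nabla Q=(0,0,0,t,s)$ this has rank two as soon as $t\neq 0$, while at $t=0$ one has $\nabla f=0$; hence $S$ is smooth along $L\setminus\{p_0\}$ and singular at $p_0=[1,0,0,0,0]$.

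Finally, to upgrade this to an isolated singularity I would apply Bertini's theorem: since $L$ is precisely the base locus of the system on the smooth threefold $Q$, the general member $S$ is smooth on $Q\setminus L$. Combined with the explicit computation along $L$, this gives $\mathrm{Sing}(S)=\{p_0\}$, so the singularity is isolated. The main obstacle, and the genuine content beyond a routine mirror of Lemma~\ref{lemma2.3}, is exactly this last case: because $L$ lies in the base locus, Bertini cannot be invoked directly along $L$, and the whole dichotomy between ``singular along $L$'' and ``isolated'' turns on the single monomial $x_1^{d-1}x_4$, whose weight $-u+(d-1)v$ changes sign at the threshold $\frac{u}{v}=d-1$ separating the two families of maximal subsets in Lemma~\ref{lemma2.2}.
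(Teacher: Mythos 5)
Your proposal is correct and follows essentially the same route as the paper: the same weight analysis showing that for $\frac{u}{v}<d-1$ every monomial forces $a_2+a_3+a_4\ge 2$ (hence singularity along $L$), with the dichotomy hinging on the sign of the weight $-u+(d-1)v$ of $x_1^{d-1}x_4$. For the case $(u,v)=(d,1)$ the paper merely lists the maximal-weight monomials and asserts the isolated singularity at $p_0$, whereas you supply the missing justification (the explicit gradient computation along $L$ together with Bertini off the base locus), which is a welcome completion rather than a different method.
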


\begin{proof}
Suppose that all monomials in $f$ are contained in some maximal subset $M_{< 0}(\lambda_{u,v})$. Assume that there is a monomial $x_0^{a_0}x_1^{a_1}\dots x_4^{a_4}$ of $f$ is in $M_{<0}(\lambda_{u,v})$ with $a_0+a_1\ge d-1$.
$W_{\lambda_{u,v}}(x_0^{a_0}x_1^{a_1}\dots x_4^{a_4})=(a_0-a_4)u+(a_1-a_3)v<0$.

If $a_0+a_1=d$, then $a_2=a_3=a_4=0$ and so $W_{\lambda_{u,v}}(x_0^{a_0}x_1^{a_1}\dots x_4^{a_4})=a_0u+a_1v>0$.

If $a_0+a_1=d-1$, then
\begin{align}
&a_3=1 \quad\text{and}\quad a_4=0 \Longrightarrow a_0u+(a_1-1)v<0\\
&a_3=0 \quad\text{and}\quad a_4=1 \Longrightarrow (a_0-1)u+a_1v<0\\
&a_3=0 \quad\text{and}\quad a_4=0 \Longrightarrow a_0u+a_1v<0, \quad\text{contradiction.}
\end{align}

Since $u\ge v\ge 0$, (4) and (5) have the minimum when $a_0=0, a_1=d-1$ and the minimum value is $(d-2)v$ for (4), $-u+(d-1)v$ for (5). Since $0<(d-2)v$, we must have $d-1<\frac{u}{v}$. Therefore, if $(u,v)\neq (d,1)$, then every monomial $x_0^{a_0}x_1^{a_1}\dots x_4^{a_4}$ in $f$ has $a_2+a_3+a_4\ge 2$ and hence $S$ is singular along a line $L$.

One can easily check that $M_{<0}(\lambda_{d,1})$ has monomials with the maximum weight $-1$:
\[x_1^{d-1}x_4,x_2^{d-1}x_3,x_1x_2^{d-3}x_3^2,\dots, x_1^{c-1}x_2x_3^{c}\,\, (\text{resp.} \, \, x_1^{c}x_3^{c+1})\]
where $d=2c$ (resp. $d=2c+1$).
So $S$ has isolated singularity at $p_0=[1,0,0,0,0]$ if $(u,v)=(d,1)$.
\end{proof}

By Lemma 2.3, if $S$ is a general non stable element then $S$ is singular along a line or an isolated singularity. We will show that $S$ is not semi-log canonical. Then by the open condition of semi-log canonical surface singularities, all non stable elements are not semi-log canonical.

\begin{prop}\label{prop1}
Let $deg(Y)=d\ge 4$. Suppose $S$ has singularities along a line and $S$ is not stable.
Then $S$ is not semi-log canonical.
\end{prop}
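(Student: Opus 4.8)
The plan is to reduce, using Lemma~\ref{lemma2.3} together with the openness of the semi-log canonical condition, to the model situation in which $f$ is a general form all of whose monomials lie in a maximal subset $M_{\le 0}(\lambda_{u,v})$ with $v\neq 0$ and $\frac{u}{v}<d-1$. By Lemma~\ref{lemma2.3} every monomial of such an $f$ then satisfies $a_2+a_3+a_4\ge 2$, so $f\in(x_2,x_3,x_4)^2$ and $S$ is singular along $L$ with multiplicity at least $2$. First I would record two local models on the smooth threefold $Q$: near a general point of $L$ I work in the chart $x_1=1$, eliminate $x_3=-x_0x_4-x_2^2$, and use $(x_0,x_2,x_4)$ with $L=\{x_2=x_4=0\}$; near the distinguished point $p_0=[1,0,0,0,0]$ I work in the chart $x_0=1$, eliminate $x_4=-x_1x_3-x_2^2$, and use $(x_1,x_2,x_3)$ with $L=\{x_2=x_3=0\}$.

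Next I would compute the transverse singularity of $S$ along $L$. Restricting $g:=f|_Q$ to a general transverse slice at a point of $L$ gives a plane curve germ whose quadratic part is a binary form $q(x_2,x_4)=Ax_2^2+Bx_2x_4+Cx_4^2$. The key point is that $A=0$: a monomial $x_2^2x_0^{a_0}x_1^{a_1}$ has $\lambda_{u,v}$-weight $a_0u+a_1v>0$ (since $a_0+a_1=d-2>0$), so it cannot occur in $f$; hence $q=x_4(Bx_2+Cx_4)$ is always divisible by $x_4$. Moreover, for $d\ge 4$ the weight estimates of Lemmas~\ref{lemma2.1}--\ref{lemma2.3} exclude the $x_2x_3$-type monomials, so $B$ comes only from $x_2x_4x_1^{d-2}$, of weight $-u+(d-2)v$; thus $B\neq 0$ exactly when $\frac{u}{v}\ge d-2$.

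When $\frac{u}{v}<d-2$ (in particular when $u=v$) one has $B=0$, so $q=Cx_4^2$ is a perfect square (or vanishes); the general transverse slice then has a double point that is not a node, so $S$ is not nodal in codimension one, fails to be demi-normal, and is therefore not semi-log canonical. The remaining range $d-2\le \frac{u}{v}<d-1$ forces $u>v$, and there the general transverse slice is a genuine node, so $S$ is demi-normal and the previous argument does not apply; I localize instead at $p_0$. In the chart $(x_1,x_2,x_3)$ I take the weighted blow-up of $Q$ with weights $(u-v,\,u,\,u+v)$, the absolute values of the $\lambda_{u,v}$-weights of $x_1/x_0,x_2/x_0,x_3/x_0$. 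A direct bookkeeping shows that a monomial of $f$ contributes to $g$ with weighted order $ud-W$, where $W\le 0$ is its $\lambda_{u,v}$-weight; hence the weighted multiplicity of $g$ equals $ud$, attained for instance by the weight-zero monomial $x_2^d$. Writing $E$ for the exceptional divisor, the discrepancy of $E_{\bar S}=E\cap\bar S$ over $S$ is $((u-v)+u+(u+v))-1-ud=3u-1-ud=-1+u(3-d)$, which is strictly less than $-1$ as soon as $d\ge 4$ (and equals exactly $-1$ when $d=3$, consistently with the semistable boundary). A discrepancy below $-1$ should show $S$ is not semi-log canonical at $p_0$.

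The main obstacle is this last implication in the non-normal setting: since $S$ is singular along $L\ni p_0$, semi-log canonicity is tested on the normalization $\nu\colon\bar S^\nu\to S$ together with its conductor divisor, so I must justify that the embedded weighted-blow-up computation over the Gorenstein (complete intersection) surface $S$ really produces a divisorial valuation of discrepancy below $-1$ for the pair $(\bar S^\nu,\mathrm{cond})$. Concretely this means checking that $E_{\bar S}$ yields a genuine divisor on the normalized strict transform and that passing to the normalization and adding the effective conductor only decreases the discrepancy further. I expect the weighted-multiplicity bookkeeping and the positivity of the weights to be routine—positivity of $u-v$ is precisely what the case $B\neq 0$ guarantees—while the careful transfer of the discrepancy inequality through the normalization is where the real work lies.
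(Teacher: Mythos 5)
Your reduction to a general $f$ supported on a maximal $M_{\le 0}(\lambda_{u,v})$ with $v\neq 0$, $\frac{u}{v}<d-1$, followed by openness of slc, matches the paper's setup, but from there you diverge. Your first case ($\frac{u}{v}<d-2$) is a correct and complete argument: the computation that $A=0$ always and that $B$ can only come from $x_1^{d-2}x_2x_4$ (of weight $-u+(d-2)v$) checks out, so the transverse quadratic form at a general point of $L$ degenerates to $Cx_4^2$, the surface fails to be normal crossing at the generic point of $L$, and demi-normality fails. This is a genuinely different route from the paper, which never looks at the generic point of $L$: the paper works only at $p_0=[1,0,0,0,0]$, shows $\mathrm{mult}_{p_0}(\si\cdot S)\ge 3$ for $d\ge 5$ (because every monomial has $a_0<d-2$), and invokes the classification of semi-log canonical surface singularities (Theorem 4.24 in \cite{KSB}) -- all non-normal slc points are double points -- with a separate check for $d=4$, $u=v$, where the lone offending monomial $x_0^2x_3^2$ gives $f_2=y_3^2$ and one rules out a pinch point by inspecting higher-order terms.

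The genuine gap is in your second case, $d-2\le \frac{u}{v}<d-1$, and it is exactly the one you flag: the weighted blow-up computes a discrepancy for the pair $(\mathbb{C}^3, S)$ (equivalently, the bound $c_0(g)\le \frac{3u}{ud}=\frac{3}{d}<1$ of Lemma~\ref{kollar}), and passing from "$(\mathbb{C}^3,S)$ is not log canonical at $p_0$" to "$S$ is not slc at $p_0$" when $S$ is non-normal along $L\ni p_0$ requires inversion of adjunction in the lc/slc form (or a semi-resolution argument); this is a nontrivial theorem you neither cite nor prove, so as written the case is not closed. Moreover the detour is unnecessary: in the range $\frac{u}{v}\ge d-2\ge 2$, any monomial with $a_0\ge 2$ has weight $(a_0-a_4)u+(a_1-a_3)v\ge v\bigl(a_0(d-2)+a_1-a_3\bigr)>0$ since $a_3\le d-a_0-a_1$, so every monomial of $\si\cdot f$ has $a_0\le 1$ and hence $\mathrm{mult}_{p_0}(\si\cdot S)\ge d-1\ge 3$. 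Since $p_0$ lies on $L$ it is a non-isolated singular point of multiplicity at least $3$, and the classification of slc surface singularities finishes the case exactly as in the paper. With that substitution your proof becomes complete; as submitted, the second case rests on an unproved adjunction statement.
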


\begin{proof}
Let $f$ be the equation of $Y$. Suppose $S$ has singularities along a line and $S$ is not stable. By Lemma~\ref{lemma2.3}, all monomials in $\si\cdot f$ for some $\si\in G$ are contained in the maximal subset $M_{\le 0}(\lambda_{u,v})$ with $\frac{u}{v}<d-1$ for $v\neq 0$. Consider the points $p=[a,b,0,0,0]$ on the line $L:x_2=x_3=x_4=0$. We assume that $a\neq 0$.
Choose the affine coordinate $y_i=x_i/x_0$
Then the affine equation near $p'=(0,0,0)$ in $\mathbb{C}^3$ is
\[ 0=\si\cdot f(1,y_1-\frac{b}{a},y_2,y_3,-y_2^2-(y_1-\frac{b}{a})y_3)\]

We will show that the point $p_0=[1,0,0,0,0]$ is not semi-log canonical.
The affine equation near $p_0$ is
\[0=\si\cdot f(1,y_1,y_2,y_3,-y_2^2-y_1y_3)=f_r+f_{r+1}+\dots+f_m\]
where $f_i$ homogeneous in $y_1,y_2,y_3$ of degree $i$ and $f_r\not\equiv 0$.
So the ${\rm mult}_{p_0}(\si\cdot S)=r$. If $x_0^{a_0}x_1^{a_1}\dots x_4^{a_4}$ is a monomial in $\si\cdot f$ such that $a_0$ is the largest, then $r\ge d-a_0$.
So if $d\ge 5$, then $r\ge 3$. In fact, for any monomial in $\si\cdot f$, $a_0<d-2$ when $d\ge 5$ because if $a_0\ge d-2$, then
\begin{equation*}
\begin{aligned}
a_0\frac{u}{v}+a_1-a_3&\ge (d-2)\frac{u}{v}+a_1-a_3\\
&\ge (d-2)+a_1-a_3\\
&\ge (d-2)-2=d-4 \quad(\text{because}\,\, a_3\le 2)\\
&> 0. \quad(\text{because}\,\, d\ge 5)
\end{aligned}
\end{equation*}
and so $W_{\lambda_{u,v}}(x_0^{a_0}x_1^{a_1}\dots x_4^{a_4})=a_0u+(a_1-a_3)v>0$.

Since $p_0$ is a non isolated singularity with multiplicity$\geq 3$, $p_0$ is not semi-log canonical singularity by the classification of the semi-log canonical surface singularities (Theorem 4.24 in \cite{KSB}). Since $\si\cdot S$ is not semi-log canonical, $S$ is neither semi-log canonical.

\medskip
If $d=4$, one can easily check that $M_{\le 0}(\lambda_{1,1})$ is the only maximal subset such that $\frac{u}{v}<3$ and has monomials with $a_0\ge 2$. And the monomial is $x_0^2x_3^2$. So for $\si\cdot f$ in the linear span of $M_{\le 0}(\lambda_{1,1})$, $f_r=y_3^2$. Then by considering terms of degree it is not a pinch point. Again by the classification of the semi-log canonical surface singularities (Theorem 4.24 in \cite{KSB}), it is not semi-log canonical.
\end{proof}

A log canonical singularity can be checked by the computation of log canonical threshold. Log canonical thresholds can be calculated from a set of weights associated with the variables.

\begin{lemma}\label{kollar} {\rm [Proposition 8.14 in \cite{Kollar}]} Let $f$ be a holomorphic function near $0 \in \mathbb C^n$. and $D=\{ f=0\}$. Assign rational weights $w(x_i)$ to
the variables $x_i$, and let $w(f)$ be the weighted multiplicity of $f$. Then $$c_0(f)\le\frac{\sum{w(x_i)}}{w(f)}.$$
And the equality holds if the weighted homogeneous leading term $f_w$ of
$f$ has an isolated critical point at the origin or if
$f_w(x_1^{w(x_1)}, \dots , x_n^{w(x_n)})=0\subset\mathbb P^{n-1}$ is smooth.
\end{lemma}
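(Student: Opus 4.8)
The plan is to reduce the statement to the standard divisorial characterization of the log canonical threshold and to realize the assigned weights as a single divisorial valuation. First I would clear denominators so that the rational weights become positive integers $w(x_i)=w_i$; this only rescales both $w(f)$ and $\sum w(x_i)$ by a common factor and so leaves the ratio unchanged. I would then consider the monomial valuation $v$ on $\CC^n$ determined by $v(x_i)=w_i$, realized geometrically by the weighted blow-up $\pi\colon X\to\CC^n$ with weights $(w_1,\dots,w_n)$, whose unique exceptional prime divisor $E$ (a copy of the weighted projective space $\PP(w_1,\dots,w_n)$) has associated valuation exactly $v$. The two invariants attached to $E$ are then computed directly: since $w(f)$ is by definition the minimal weight $\sum a_i w_i$ among the monomials $\prod x_i^{a_i}$ occurring in $f$, we get $\mathrm{ord}_E(f)=v(f)=w(f)$; and a standard Jacobian computation on the weighted blow-up shows that $dx_1\wedge\cdots\wedge dx_n$ vanishes to order $\sum_i w_i-1$ along $E$, so that the log discrepancy of $E$ is $a(E)+1=\sum_i w(x_i)$.

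With these in hand the inequality is immediate from the characterization
\[ c_0(f)=\inf_{E}\frac{a(E)+1}{\mathrm{ord}_E(f)}, \]
the infimum running over all prime divisors over a neighbourhood of $0$ whose center contains $0$: testing against our single divisor $E$ gives
\[ c_0(f)\le\frac{a(E)+1}{\mathrm{ord}_E(f)}=\frac{\sum_i w(x_i)}{w(f)}. \]
An equivalent route I could take is purely analytic: writing $c_0(f)=\sup\{c>0:|f|^{-2c}\in L^1_{\mathrm{loc}}(0)\}$ and passing to weighted polar coordinates adapted to the action $t\cdot(x_i)=(t^{w_i}x_i)$, the radial part of the integral behaves like $\int_0 r^{\,2\sum_i w_i-1-2c\,w(f)}\,dr$, which converges precisely for $c<\sum_i w(x_i)/w(f)$. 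This again yields the bound and, as a bonus, isolates exactly what must be checked for equality, namely finiteness of the remaining angular integral of $|f_w|^{-2c}$ at the critical value of $c$.

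The genuine content, and the step I expect to be the main obstacle, is the equality clause: one must show that under either stated hypothesis the divisor $E$ actually computes the threshold, i.e.\ that no other divisor produces a strictly smaller ratio. The plan here is to examine the strict transform $\widetilde D\subset X$ of $D=\{f=0\}$ along $E$. The weighted leading term $f_w$ is exactly the equation cutting out $\widetilde D\cap E$ inside $E=\PP(w_1,\dots,w_n)$. When $f_w$ has an isolated critical point at the origin, $\widetilde D$ meets the quotient-singular strata of $X$ only in a controlled way and $\widetilde D\cap E$ is otherwise smooth; when $f_w(x_1^{w_1},\dots,x_n^{w_n})=0$ is smooth in $\PP^{n-1}$, pulling back along the finite cover $x_i\mapsto x_i^{w_i}$ trivializes the weighted structure and exhibits the weighted blow-up as a log resolution in the relevant neighbourhood. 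In either case the pair consisting of $X$, the divisor $\widetilde D$, and the reduced $E$ has mild enough singularities along $E$ that restricting to $E$ (inversion of adjunction) forces the infimum to be attained by $E$, upgrading the inequality to an equality. The delicate points I anticipate are handling the quotient singularities of the weighted blow-up uniformly and treating the two apparently different smoothness hypotheses within a single argument; since, however, the statement is quoted verbatim as Proposition 8.14 of \cite{Kollar}, I would ultimately cite that source for the equality clause and rest the exposition on the transparent valuation-theoretic derivation of the inequality given above.
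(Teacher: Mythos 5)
This lemma is imported verbatim from Proposition 8.14 of \cite{Kollar}; the paper gives no proof of its own beyond the citation, so there is nothing internal to compare against. Your valuation-theoretic derivation of the inequality is correct and is essentially the argument of the cited source: the weighted blow-up with integral weights $(w_1,\dots,w_n)$ produces a single divisor $E$ with $\mathrm{ord}_E(f)=w(f)$ and log discrepancy $a(E)+1=\sum_i w(x_i)$ (the discrepancy computation is legitimate despite the quotient singularities of the weighted blow-up, since $E$ is a toric valuation and the formula can be read off from the monomial valuation directly), and testing this divisor in the valuative characterization of $c_0$ gives the bound. The alternative analytic route via integrability of $|f|^{-2c}$ in weighted polar coordinates is also sound. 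For the equality clause you give only a sketch (inversion of adjunction along the strict transform, or trivializing the weights by the cover $x_i\mapsto x_i^{w_i}$) and then defer to \cite{Kollar}; that is an honest and acceptable resolution, and it is worth noting that the paper only ever uses the inequality direction of this lemma (in Proposition~\ref{prop2} and in the semistability theorem), so the part you prove completely is the part that carries all the weight in the application.
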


\begin{prop}\label{prop2}
Let $deg(Y)=d\ge 4$. Suppose $S$ is normal and $S$ is not stable.
Then $S$ is not log canonical.
\end{prop}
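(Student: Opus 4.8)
The plan is to reduce, using the normality hypothesis together with Lemmas~\ref{lemma2.1} and~\ref{lemma2.3}, to the two isolated-singularity cases, and then to bound the log canonical threshold of the local equation strictly below $1$ by means of Koll\'ar's weighted estimate (Lemma~\ref{kollar}). Since $S$ is not stable, after replacing $f$ by $\si\cdot f$ for a suitable $\si\in G$ and normalizing the destabilizing $1$-PS, all monomials of $f$ lie in some maximal subset $M_{\le 0}(\lambda_{u,v})$ with $(u,v)=(1,0)$ or $v\neq 0$ and $\frac{u}{v}\le d-1$. If $\frac{u}{v}<d-1$, the computation in the proof of Lemma~\ref{lemma2.3} forces every monomial of $f$ to satisfy $a_2+a_3+a_4\ge 2$, so $f\in(x_2,x_3,x_4)^2$ and $S$ is singular along the line $L$, contradicting normality. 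Hence only $(u,v)=(1,0)$ and $(u,v)=(d-1,1)$ survive, and in both cases $S$ has an isolated singularity at $p_0=[1,0,0,0,0]$; since $\si$ is an automorphism it suffices to prove $S$ is not log canonical there. Locally $S\subset Q$ is a hypersurface, and in the affine coordinates $y_i=x_i/x_0$ with $Q$ given by $y_4=-y_2^2-y_1y_3$ its equation is $g(y_1,y_2,y_3)=f(1,y_1,y_2,y_3,-y_2^2-y_1y_3)$; thus $S$ is log canonical at $p_0$ iff $c_0(g)\ge 1$.

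For $(u,v)=(1,0)$ every monomial of $f$ has $a_0=0$, so after substitution each monomial of $g$ has ordinary order $a_1+a_2+a_3+2a_4=d+a_4\ge d$, whence $\mathrm{mult}_{p_0}(g)\ge d\ge 4$. Applying Lemma~\ref{kollar} with the weights $w(y_1)=w(y_2)=w(y_3)=1$ gives $c_0(g)\le 3/w(g)\le 3/d<1$, so $S$ is not log canonical. For $(u,v)=(d-1,1)$ I would instead assign the weights $w(y_1)=d-2$, $w(y_2)=d-1$, $w(y_3)=d$; these are the values $w(y_i)=w_\lambda(x_0)-w_\lambda(x_i)$ and are compatible with the relation $y_4=-y_2^2-y_1y_3$, which is weighted homogeneous of weight $2d-2$. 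A direct computation then shows that the weighted order of the image of a monomial of $f$ equals
\[ d(d-1)-W_{\lambda_{d-1,1}}(x_0^{a_0}x_1^{a_1}x_2^{a_2}x_3^{a_3}x_4^{a_4}). \]
Since every monomial of $f$ has $W_{\lambda_{d-1,1}}\le 0$, this yields $w(g)\ge d(d-1)$, and Lemma~\ref{kollar} gives $c_0(g)\le \frac{(d-2)+(d-1)+d}{w(g)}\le \frac{3(d-1)}{d(d-1)}=\frac{3}{d}<1$ for $d\ge 4$.

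The step I expect to carry the weight of the argument is the weighted-order identity in the $(d-1,1)$ case: it is exactly what converts the GIT weight inequality $W_{\lambda_{d-1,1}}\le 0$ into the lower bound $w(g)\ge d(d-1)$ on the weighted multiplicity, after which Koll\'ar's bound does the rest. Two points need care in the write-up: first, I only use the inequality in Lemma~\ref{kollar}, which holds unconditionally (the hypotheses there are needed only for equality), so obtaining $c_0(g)<1$ is enough; and second, the normality hypothesis is precisely what rules out the line-singularity subsets $\frac{u}{v}<d-1$ and pins us to the two isolated cases. Combining both cases, $S$ fails to be log canonical at $p_0$, completing the proof.
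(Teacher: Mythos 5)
Your proposal is correct and follows essentially the same route as the paper: normality rules out the line-singularity subsets, the $(1,0)$ case is killed by multiplicity $\ge d$, and the $(d-1,1)$ case by Koll\'ar's weighted bound --- indeed your weights $(d-2,d-1,d)$ specialize at $d=4$ to the paper's choice $(2,3,4)$. The only (harmless) difference is that you treat all $d\ge 4$ in the $(d-1,1)$ case uniformly via the identity $w(g)\ge d(d-1)-W_{\lambda_{d-1,1}}$, whereas the paper disposes of $d\ge 5$ by the cruder bound $\mathrm{mult}_{p_0}(S)\ge d-1\ge 4$ and only invokes the weighted estimate for $d=4$; your version is slightly cleaner and the identity checks out.
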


\begin{proof}
Let $f$ be the equation of $Y$. Suppose that $S$ is normal and $S$ is not stable. By Lemma~\ref{lemma2.3}, all monomials in $\si\cdot f$ for some $\si\in G$ are contained in the maximal subset $M_{\le 0}(\lambda_{1,0})$ or $M_{\le 0}(\lambda_{d-1,1})$.

Let $(u,v)=(1,0)$. After choosing the affine coordinates as in before, consider the affine equation near $p_0=[1,0,0,0,0]$. Then the ${\rm mult}_{p_0}(\si\cdot S)=d$ because $M_{\le 0}(\lambda_{1,0})=\{x_1^{a_1}x_2^{a_2}x_3^{a_3}x_4^{a_4}|\sum_{i=1}^4 a_i=d\}$. So it is not log canonical.

Now let $(u,v)=(d-1,1)$. Since $M_{\le 0}(\lambda_{d-1,1})$ has monomials with the maximal weight zero $x_0x_3^{d-1},x_1^{d-1}x_4,x_2^d, x_1x_2^{d-2}x_3,\dots,x_1^{c}x_3^{c}$ (resp. $x_1^{c}x_2x_3^{c}$) where $d=2c$ (resp. $d=2c+1$), $mult_{p_0}(S)\ge d-1$.

When $d=4$, a general form of $\si\cdot f$ is
\[x_0x_3^3+x_1^2x_3^2+x_4g_3(x_1,x_2,x_3,x_4)+x_1x_3g_2(x_2,x_3)+g_4(x_2,x_3,x_4)\]
where $g_i$ is a polynomial of degree $i$. Choose the affine coordinates as in before. Then the affine equation near $p_0=[1,0,0,0,0]$ is
\begin{equation*}
\begin{aligned}
0&=\si\cdot f(1,y_1,y_2,y_3,-y_2^2-y_1y_3)\\
&=y_3^3+y_1^2y_3^2+(-y_2^2-y_1y_3)g_3(y_1,y_2,y_3,-y_2^2-y_1y_3)+y_1y_3g_2(y_2,y_3)\\
&\, \, \, +g_4(y_2,y_3,-y_2^2-y_1y_3).
\end{aligned}
\end{equation*}

Assign a weight $w=(2,3,4)$, then the log canonical threshold of $\si\cdot f$ at $p_0$
\begin{equation*}
c_0(f)\le \frac{9}{12}=\frac{3}{4}.
\end{equation*}
So $\si\cdot S$ is not log canonical, neither is $S$.
\end{proof}

By Lemma 2.4, if $S$ is a general unstable element then $S$ is singular along a line or an isolated singularity. We will show that $S$ is not semi-log canonical when $S$ has singularities along the line. Then by the open condition of semi-log canonical surface singularities, all unstable elements are not semi-log canonical.

\begin{prop}\label{prop3}
Let $deg(Y)=d\ge 3$. Suppose $S$ has singularities along a line and $S$ is unstable.
Then $S$ is not semi-log canonical.
\end{prop}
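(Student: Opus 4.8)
The plan is to mimic the reduction of Proposition~\ref{prop1} and then to isolate the genuinely new case $d=3$. By Lemma~\ref{lemma2.4} there is $\si\in G$ so that every monomial of $\si\cdot f$ lies in a maximal set $M_{<0}(\lambda_{u,v})$ with $(u,v)\ne(d,1)$, that is $v\ne 0$ and $\frac{u}{v}<d-1$; since $G\subset\operatorname{Aut}(Q)$ preserves the singularity type it suffices to show $\si\cdot S$ is not semi-log canonical. Working in the chart $x_0=1$ and eliminating $x_4=-y_2^2-y_1y_3$ on $Q$, I study the affine equation
\[ g(y_1,y_2,y_3)=\si\cdot f(1,y_1,y_2,y_3,-y_2^2-y_1y_3) \]
at the point $p_0=[1,0,0,0,0]\in L$; note $p_0\in \si\cdot S$ since $x_0^d\notin M_{<0}(\lambda_{u,v})$.

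For $d\ge 4$ there is nothing new: an unstable point is automatically non-stable, so the hypotheses of Proposition~\ref{prop1} are met and $S$ is not semi-log canonical. (Concretely, a similar weight estimate forces $a_0\le d-3$ for every monomial, whence $\operatorname{mult}_{p_0}(\si\cdot S)\ge 3$ on the singular line.) I may therefore assume $d=3$.

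For $d=3$ the condition $\frac{u}{v}<d-1=2$ singles out a unique maximal set $M^\ast$, namely $M_{<0}(\lambda_{u,v})$ for $1<\frac{u}{v}<2$, and I would first list its fourteen monomials. The whole point is then a single weighted estimate via Lemma~\ref{kollar}. Assign $w(y_1,y_2,y_3)=(2,4,7)$; since the substitution sends $x_4$ to $-(y_2^2+y_1y_3)$ with $w(y_2^2)=8$ and $w(y_1y_3)=9$, each monomial $x_1^{a_1}x_2^{a_2}x_3^{a_3}x_4^{a_4}$ of $M^\ast$ produces only terms of weight at least $2a_1+4a_2+7a_3+8a_4$. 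A direct inspection of the fourteen monomials shows this quantity is always $\ge 14$, with equality exactly for $x_0x_3^2$ (giving $y_3^2$) and $x_1x_2x_4$ (giving $y_1y_2^3$); hence $w(g)=14$ and
\[ c_0(g)\le\frac{2+4+7}{14}=\frac{13}{14}<1. \]
Because $\si\cdot S$ is a reduced divisor in the smooth threefold $Q$, the bound $c_0(g)<1$ means the pair $(Q,\si\cdot S)$ is not log canonical, so $\si\cdot S$—and therefore $S$—is not semi-log canonical at $p_0$.

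Two points deserve care. First, the estimate must come out strictly below $1$, so I must check that the substitution $x_4\mapsto -y_2^2-y_1y_3$ never creates a term of weight $<14$: this is exactly the inequality $2a_1+4a_2+7a_3+8a_4\ge 14$ on $M^\ast$, where the effective weight $8=\min(2\cdot4,\,2+7)$ of $x_4$ is dictated by the quadric relation. This monomial bookkeeping is the main (though routine) obstacle, and it is precisely what reflects the degeneracy forced by $M^\ast$: a leading term $y_3^2$ by itself would be harmless, and it is only because \emph{all} remaining monomials are heavy that $w(g)$ stays equal to $14$. Second, the conclusion needs $\si\cdot S$ to be reduced; for a genuine surface $S$ this holds, and then the bound $c_0(g)\le\frac{13}{14}$ is valid for \emph{every} such $f$, so in the $d=3$ case no separate openness argument is required.
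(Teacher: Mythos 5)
Your reduction is sound and your computation checks out, but for the decisive case $d=3$ you take a genuinely different route from the paper. Both arguments begin identically: Lemma~\ref{lemma2.4} puts all monomials of $\si\cdot f$ into a maximal $M_{<0}(\lambda_{u,v})$ with $\frac{u}{v}<d-1$, and for $d\ge 4$ one gets $a_0\le d-3$, hence a non-isolated singularity of multiplicity $\ge 3$ at $p_0$, which is excluded by the classification of semi-log canonical surface singularities (your shortcut of simply invoking Proposition~\ref{prop1}, since unstable implies not stable, is legitimate). For $d=3$, however, the paper observes that $x_0x_3^2$ is the only monomial of $M^\ast$ with $a_0\ne 0$, so $f_r=y_3^2$, and then argues via the classification that a double point of this shape on a non-normal surface is not a pinch point (nor normal crossings), hence not semi-log canonical. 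You instead apply the weighted log-canonical-threshold bound of Lemma~\ref{kollar} with $w(y_1,y_2,y_3)=(2,3,4)$ replaced by $(2,4,7)$: I verified that all fourteen monomials of $M^\ast$ satisfy $2a_1+4a_2+7a_3+8a_4\ge 14$ (with equality only for $x_0x_3^2\mapsto y_3^2$ and $x_1x_2x_4\mapsto -y_1y_2^3-y_1^2y_2y_3$, whose weight-$14$ parts cannot cancel), so $w(g)\ge 14$ and $c_0(g)\le\frac{13}{14}<1$ for \emph{every} nonzero $f$ supported in $M^\ast$. This buys you uniformity --- no appeal to genericity or to the ``open condition'' degeneration argument the paper leans on --- and it sidesteps the paper's rather terse ``not a pinch point'' step. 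The one point you should make explicit is that passing from ``$(Q,\si\cdot S)$ is not log canonical'' to ``$\si\cdot S$ is not semi-log canonical'' at the non-normal point $p_0$ uses (inversion of) adjunction for a reduced divisor in a smooth threefold; this is standard for surfaces and the paper implicitly relies on the same principle in Proposition~\ref{prop2} and Theorem 2.10, so it is not a gap relative to the paper's own standard of rigor, but it deserves a citation.
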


\begin{proof}
Let $f$ be the equation of $Y$. Suppose $S$ has singularities along a line and $S$ is unstable.
By Lemma~\ref{lemma2.4}, all monomials in $\si\cdot f$ for some $\si\in G$ are contained in the maximal subset $M_{<0}(\lambda_{u,v})$ with $\frac{u}{v}<d-1$ for $v\neq 0$. Consider the points $p=[a,b,0,0,0]$ on the line $L:x_2=x_3=x_4=0$. We assume that $a\neq 0$. Choose the affine coordinate as in before.
Then the affine equation near $p'=(0,0,0)$ in $\mathbb{C}^3$ is
\[ 0=\si\cdot f(1,y_1-\frac{b}{a},y_2,y_3,-y_2^2-(y_1-\frac{b}{a})y_3)\]

We will show that the point $p_0=[1,0,0,0,0]$ is not semi-log canonical.
The affine equation near $p_0$ is
\[0=\si\cdot f(1,y_1,y_2,y_3,-y_2^2-y_1y_3)=f_r+f_{r+1}+\dots+f_m\]
where $f_i$ homogeneous in $y_1,y_2,y_3$ of degree $i$ and $f_r\not\equiv 0$.
And if $x_0^{a_0}x_1^{a_1}\dots x_4^{a_4}$ is a monomial in $\si\cdot f$ such that $a_0$ is the largest, then $r\ge d-a_0$.
So if $d\ge 4$, then $r\ge 3$. In fact, for any monomial in $\si\cdot f$, $a_0<d-2$ when $d\ge 4$ because if $a_0\ge d-2$, then
\begin{equation*}
\begin{aligned}
a_0\frac{u}{v}+a_1-a_3&\ge (d-2)\frac{u}{v}+a_1-a_3\\
&\ge (d-2)+a_1-a_3\\
&\ge (d-2)-2=d-4 \quad(\text{because}\,\, a_3\le 2)\\
&\ge 0. \quad(\text{because}\,\, d\ge 4)
\end{aligned}
\end{equation*}
and so $W_{\lambda_{u,v}}(x_0^{a_0}x_1^{a_1}\dots x_4^{a_4})=a_0u+(a_1-a_3)v\ge 0$.

Since $p_0$ is a non isolated singularity with multiplicity$\geq 3$, $p_0$ is not semi-log canonical singularity by the classification of the semi-log canonical surface singularities (Theorem 4.24 in \cite{KSB}). Since $\si\cdot S$ is not semi-log canonical, $S$ is neither semi-log canonical.

\medskip
Now, we consider the case $d=3$. Suppose that $M_{<0}(\lambda_{u,v})$ is a maximal subset with $\frac{u}{v}<2$.  One can check easily that if there is a monomial with $a_0\neq 0$ in $M_{<0}(\lambda_{u,v})$ then the monomial is $x_0x_3^2$. So for $\si\cdot f$ in the linear span of $M_{<0}(\lambda_{u,v})$, $f_r=y_3^2$. Then by considering terms of degree it is not a pinch point. Again by the classification of the semi-log canonical surface singularities (Theorem 4.24 in \cite{KSB}), it is not semi-log canonical.
\end{proof}

By Proposition~\ref{prop1} and Proposition~\ref{prop2}, we get the following theorem.

\begin{theorem}\label{stable}
Suppose $S$ is a complete intersection of smooth quadric hypersurface and a hypersurface of degree $d$ in $\mathbb{P}^4$.
Suppose $d\ge 4$ and $S$ has at worst semi-log canonical singularities. Then $S$ is stable.
\end{theorem}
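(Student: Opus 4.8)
The plan is to establish the theorem by contraposition, assembling Propositions~\ref{prop1} and~\ref{prop2}: assuming $d\ge 4$ and that $S$ is \emph{not} $G$-stable, I will exhibit a singularity that the classification forbids and conclude that $S$ is not semi-log canonical. First I would invoke the Hilbert--Mumford criterion together with the normalization of one-parameter subgroups explained above: non-stability of $S$ means that, after acting by a suitable $\si\in G$, all monomials of $\si\cdot f$ lie in one of the maximal subsets $M_{\le 0}(\lambda_{u,v})$ classified in Lemma~\ref{lemma2.1}, that is, in $M_{\le 0}(\lambda_{1,0})$ or in some $M_{\le 0}(\lambda_{u,v})$ with $v\ne 0$ and $u/v\le d-1$. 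Since the automorphism $\si$ of $Q$ carries $S$ to $\si\cdot S$ and semi-log canonicity is preserved under automorphisms, I may replace $S$ by $\si\cdot S$ and assume from the outset that $f$ has this special monomial support.

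Next I would split into two cases according to whether $S$ is normal. Being a complete intersection, $S$ is Cohen--Macaulay, so it is normal exactly when its singular locus is finite and non-normal exactly when it is singular along a curve. If $S$ is non-normal, then the computation in Lemma~\ref{lemma2.3} forces the relevant weight data to satisfy $u/v<d-1$ and shows that every monomial of $f$ obeys $a_2+a_3+a_4\ge 2$, so that $S$ is singular along the line $L:\{x_2=x_3=x_4=0\}$; Proposition~\ref{prop1} (which requires $d\ge 4$) then gives that $S$ is not semi-log canonical. If instead $S$ is normal, its monomials lie in the boundary subsets $M_{\le 0}(\lambda_{1,0})$ or $M_{\le 0}(\lambda_{d-1,1})$, producing an isolated singularity at $p_0=[1,0,0,0,0]$; Proposition~\ref{prop2} shows $S$ is not log canonical, and since log canonical and semi-log canonical coincide for normal surfaces, $S$ is again not semi-log canonical. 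In either case $S$ fails to be semi-log canonical, contradicting the hypothesis, so $S$ must be stable. If one prefers to read Lemma~\ref{lemma2.3} and the two propositions as statements about \emph{general} forms with the prescribed support, the same conclusion extends to every non-stable $S$ by openness of the semi-log canonical condition in the family: each linear span of a maximal $M_{\le 0}(\lambda_{u,v})$ is irreducible, its generic member is non-slc, hence so is every member, and the finitely many $G$-translates of these spans cover the whole non-stable locus.

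Since I am allowed to assume Propositions~\ref{prop1} and~\ref{prop2}, the assembly itself is short, and the only delicate point in it is identifying the singular curve of a non-normal non-stable $S$ with the line $L$ so that Proposition~\ref{prop1} is genuinely applicable; this is precisely where Lemma~\ref{lemma2.3} and the explicit list of maximal-weight monomials are needed, together with the low-degree boundary case $d=4$, where exceptional monomials such as $x_0^2x_3^2$ and $x_0x_3^{3}$ must be examined individually to rule out pinch points. The hardest part, were one to prove the underlying propositions from scratch, would be the normal case of Proposition~\ref{prop2}, especially the boundary subgroup $(u,v)=(d-1,1)$: there one must bound the log canonical threshold at $p_0$ strictly below $1$ using the weighted-multiplicity estimate of Lemma~\ref{kollar}, which also requires checking the side conditions (an isolated critical point, or smoothness of the weighted leading hypersurface) that make the estimate effective.
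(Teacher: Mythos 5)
Your proposal is correct and follows essentially the same route as the paper: the paper's proof of this theorem is literally the one-line assembly ``By Proposition~\ref{prop1} and Proposition~\ref{prop2}, we get the following theorem,'' with the reduction via Lemma~\ref{lemma2.1}, the dichotomy between singularity along the line $L$ and the normal/isolated case, and the openness-of-slc argument all stated in the surrounding text exactly as you reconstruct them. Your write-up merely makes explicit the case split and the passage from general to arbitrary non-stable forms, which the paper leaves implicit.
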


\begin{theorem}
Suppose $S$ is a complete intersection of a smooth quadric hypersurface and a hypersurface of degree $d$ in $\mathbb{P}^4$.
Suppose $d\ge 3$ and $S$ has at worst semi-log canonical singularities. Then $S$ is semistable.
\end{theorem}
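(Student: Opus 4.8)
The plan is to prove the contrapositive: if $S=Q\cap Y$ is \emph{not} semistable then $S$ is not semi-log canonical. Write $Y=\{f=0\}$. By the Hilbert--Mumford criterion together with the invariance $\mu(\si\cdot\lambda\cdot\si^{-1},\si\cdot f)=\mu(\lambda,f)$, instability produces some $\si\in G$ and a normalized 1-PS $\lambda$ with every monomial of $\si\cdot f$ lying in $M_{<0}(\lambda)$. By Lemma~\ref{lemma2.2} I may enlarge this monomial set to a maximal subset, so that all monomials of $\si\cdot f$ sit in $M_{<0}(\lambda_{u,v})$ with either $\frac{u}{v}<d-1$ (and $v\neq0$) or $(u,v)=(d,1)$. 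Since $\si$ is an automorphism of $Q$, semi-log canonicity of $S$ and of $\si\cdot S$ are equivalent, so it suffices to treat $\si\cdot S$ in each of these two cases.

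In the case $\frac{u}{v}<d-1$, the weight computation in the proof of Lemma~\ref{lemma2.4} shows every monomial of $\si\cdot f$ satisfies $a_2+a_3+a_4\ge 2$; hence $f$ vanishes to order at least $2$ along the line $L:x_2=x_3=x_4=0\subset Q$, so $\si\cdot S$ is singular along $L$. This is exactly the hypothesis of Proposition~\ref{prop3}, which for $d\ge 3$ gives that $\si\cdot S$ is not semi-log canonical. Thus $S$ is not semi-log canonical, which settles this case.

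The remaining and principal case is $(u,v)=(d,1)$, where the singularity concentrates at $p_0=[1,0,0,0,0]$. Here I would not use the multiplicity estimate (it only detects non-isolated obstructions) but instead a log canonical threshold bound in the spirit of Proposition~\ref{prop2}. In the chart $y_i=x_i/x_0$, eliminating $y_4=-(y_1y_3+y_2^2)$ by the quadric gives the local equation $F(y_1,y_2,y_3)=\si\cdot f(1,y_1,y_2,y_3,-(y_1y_3+y_2^2))$. The decisive choice is of weights: since $\lambda_{d,1}$ acts on $(y_1,y_2,y_3)$ with weights $(1-d,-d,-1-d)$, I assign the positive weights $w(y_1)=d-1,\ w(y_2)=d,\ w(y_3)=d+1$, under which $y_1y_3+y_2^2$ is homogeneous of weight $2d$. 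A direct computation then gives that any monomial of $M_{<0}(\lambda_{d,1})$ contributes, after substitution, a term of weighted order $d^2+a_4d+(a_3-a_1)$, and the defining inequality $a_1-a_3-da_4<0$ forces this to be $\ge d^2+1$; so $w(F)\ge d^2+1$, with equality realized for a general member by the monomial $x_2^{d-1}x_3$.

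Lemma~\ref{kollar} then yields
\[
c_0(F)\le\frac{w(y_1)+w(y_2)+w(y_3)}{w(F)}\le\frac{3d}{d^2+1},
\]
which is strictly less than $1$ precisely because $d^2-3d+1>0$ for all $d\ge 3$. Hence $(\mathbb{C}^3,\{F=0\})$ is not log canonical at $p_0$, so $\si\cdot S$ is not semi-log canonical there, and therefore neither is $S$. I expect the main obstacle to lie in this last case: one must check that the \emph{single} weight vector $(d-1,d,d+1)$ bounds \emph{every} monomial of $M_{<0}(\lambda_{d,1})$ from below after the quadric substitution --- not merely the extremal weight-$(-1)$ monomials listed in Lemma~\ref{lemma2.4} --- and that the resulting threshold stays below $1$ all the way down to $d=3$. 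Both reduce to the inequality $d^2-3d+1>0$, which is exactly why the semistable bound is $d\ge 3$ rather than $d\ge 4$.
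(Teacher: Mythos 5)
Your proposal is correct, and its decisive step is the same as the paper's: a log canonical threshold bound via Koll\'ar's weighted-multiplicity inequality (Lemma~\ref{kollar}) applied in the chart $y_4=-(y_1y_3+y_2^2)$ at $p_0=[1,0,0,0,0]$. The difference is organizational but real. The paper invokes Theorem~\ref{stable} to dispose of all $d\ge 4$ at once (stable implies semistable), and then only has to treat $d=3$, where the line case is covered by Proposition~\ref{prop3} and the remaining case $M_{<0}(\lambda_{3,1})$ is handled with the specific weight $(2,3,4)$, giving $c_0\le 9/10$. You instead run the two cases of Lemma~\ref{lemma2.2} uniformly for all $d\ge 3$: the line case again via Proposition~\ref{prop3}, and the $(u,v)=(d,1)$ case with the weight $(d-1,d,d+1)$, under which $y_1y_3+y_2^2$ is homogeneous of weight $2d$ and the defining inequality $a_1-a_3-da_4<0$ gives weighted order $d^2+da_4+(a_3-a_1)\ge d^2+1$ for every monomial, hence $c_0\le 3d/(d^2+1)<1$ exactly when $d^2-3d+1>0$, i.e.\ for all $d\ge 3$. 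At $d=3$ this reproduces the paper's $(2,3,4)$ and $9/10$ verbatim. What your version buys is that the semistability theorem no longer depends on Propositions~\ref{prop1} and~\ref{prop2} (only on Lemma~\ref{lemma2.2}, Proposition~\ref{prop3} and Lemma~\ref{kollar}), and the threshold $3d/(d^2+1)$ makes transparent why the semistable range starts at $d=3$; what the paper's route buys is brevity, since the $d\ge4$ work has already been done for stability. Your computation checks out: with $a_0=0$ forced in $M_{<0}(\lambda_{d,1})$, the weighted order identity follows from $\sum_{i\ge1}a_i=d$, and the unconditional direction of Lemma~\ref{kollar} is all that is used, so no isolated-critical-point hypothesis needs verifying.
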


\begin{proof}
By Theorem~\ref{stable}, it is sufficient to show that when $d=3$, and if all monomials of $\si\cdot f$ for some $\si\in G$ are in $M_{<0}(\lambda_{3,1})$, then $\si\cdot S$ is not log canonical.
One can check easily that monomials with the maximal weight in $M_{<0}(\lambda_{3,1})$ are $x_1^2x_4, x_1x_3^2, x_2^2x_3$ and a general form of $\si\cdot f$ is \[x_4q(x_1,x_2,x_3,x_4)+x_3^2l(x_1,x_2,x_3,x_4)+x_3q'(x_2,x_3,x_4)\]
where $q,q'$ are quadratic polynomials and $l$ is a linear polynomial.
Choose an affine coordinate as in before, then the equation near $p_0=[1,0,0,0,0]$ is
\[\begin{array}{l}
(-y_2^2-y_1y_3)q(y_1,y_2,y_3,-y_2^2-y_1y_3)+y_3^2l(y_1,y_2,y_3,-y_2^2-y_1y_3)\\
+y_3q'(y_2,y_3,-y_2^2-y_1y_3)=0.
\end{array}\]

Assign a weight $w=(2,3,4)$, then
\begin{equation*}
c_0(f)\le \frac{9}{10}.
\end{equation*}
So $\si\cdot S$ is not log canonical, and it implies $S$ is not log canonical.
\end{proof}

\medskip

Before finishing this section, we remark on strictly semistable points with minimal orbits. For $f\in \mathbb P(V)$ which is not properly stable, using the special 1-PS $\lambda_{u,v}$, the limit $lim_{t\to \infty}f_t=f_0$ exists and it is invariant with respect to $\lambda_{u,v}$. The invariant part of polynomials in any maximal subset $M_{\le 0}(\lambda_{u,v})$ have a common specialization, which we denote by Type $(\xi)$:
\begin{equation*}
\begin{aligned}
&\mu_0 x_2^d+\mu_1 x_1x_2^{d-2}x_3+\dots+\mu_c x_1^{c}x_3^{c}=0 \quad\text{if}\quad d=2c\\
&\mu_0 x_2^d+\mu_1 x_1x_2^{d-2}x_3+\dots+\mu_c x_1^{c}x_2x_3^{c}=0 \quad\text{if}\quad d=2c+1.
\end{aligned}
\end{equation*}

If $S$ is of Type $(\xi)$, it is strictly semistable with closed orbits due to Luna's criterion.

\begin{lemma}\label{luna} {\rm (Luna's criterion \cite{Luna})} Let $G$ be a reductive group acting on an affine variety $V$. If $H$ is a reductive subgroup of $G$ and $x\in V$ is stabilized by $H$, then the orbit $G\cdot x$ is closed in $V$ if and only if $C_G(H)\cdot x$ is closed in $V^H$ where $C_G(H)$ is the centralizer and $V^H$ is the fixed point set.
\end{lemma}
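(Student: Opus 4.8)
Write $C=C_G(H)$ and $W=V^H$. The plan is to prove the two implications separately, after recording the standing structural facts that make both sides meaningful: since $G$ is reductive and $H$ is a reductive subgroup, the centralizer $C=C_G(H)$ is again reductive; and $W=V^H$ is a closed subvariety of $V$ stable under $N_G(H)$, hence under $C$, so $C$ acts on the affine variety $W$ and the phrase ``$C\cdot x$ closed in $V^H$'' is meaningful. Throughout I will use two standard tools: the Hilbert--Mumford/Kempf--Birkes criterion (for a reductive group $\Gamma$ acting on an affine variety, an orbit $\Gamma\cdot w$ fails to be closed if and only if some $1$-PS of $\Gamma$ carries $w$ to a point of the strictly lower-dimensional closed orbit in $\overline{\Gamma\cdot w}$), and Matsushima's criterion (a closed orbit has reductive stabilizer).

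\emph{Reverse direction.} Assume $C\cdot x$ is closed in $W$ and suppose, for contradiction, that $G\cdot x$ is not closed in $V$. By Kempf's theory there is a canonical (``optimal'') conjugacy class of destabilizing $1$-PS for $x$, with a well-defined associated parabolic $P\subset G$; because $H\subseteq G_x$ fixes $x$, it preserves this optimal class and lies in $P$, and, being reductive, it can be placed inside the centralizing Levi $C_G(\mu)$ of a suitable representative $\mu$ of the class. Equivalently, we may choose the destabilizing $1$-PS $\mu$ to commute with $H$, i.e. $\mu$ is a $1$-PS of $C$. As $x\in W$ and $\mu$ maps into $C$, the whole curve $\mu(t)\cdot x$ lies in $W$, so the limit $z=\lim_{t\to 0}\mu(t)\cdot x$ lies in $W$ as well; moreover $z\notin G\cdot x\supseteq C\cdot x$ by the choice of $\mu$. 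Thus $z\in\overline{C\cdot x}\setminus C\cdot x$, contradicting the closedness of $C\cdot x$ in $W$. Hence $G\cdot x$ is closed.

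\emph{Forward direction.} Assume $G\cdot x$ is closed and suppose $C\cdot x$ is not closed in $W$. By Kempf--Birkes applied to the $C$-action on $W$ there is a $1$-PS $\lambda$ of $C$ with $x_0:=\lim_{t\to0}\lambda(t)\cdot x\in W$, with $C\cdot x_0$ closed and $x_0\notin C\cdot x$, so that $\dim C\cdot x_0<\dim C\cdot x$. Since $\lambda$ maps into $C\subseteq G$ and $G\cdot x$ is closed, $x_0\in\overline{G\cdot x}\cap W=G\cdot x\cap W$; write $x_0=g\cdot x$. Then $H$ fixes $x_0$, so $g^{-1}Hg\subseteq G_x$, while Matsushima gives that $G_x$ is reductive. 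I would now argue that $\dim C_{x_0}=\dim C_x$, which contradicts $\dim C\cdot x_0<\dim C\cdot x$ and finishes the proof: if $H$ and $g^{-1}Hg$ are conjugate inside $G_x$, say by $s\in G_x$, then $n:=gs^{-1}\in N_G(H)$ satisfies $n\cdot x=x_0$ and $G_{x_0}=nG_xn^{-1}$; since $n$ normalizes $C$ we get $C_{x_0}=C\cap nG_xn^{-1}=n\,C_x\,n^{-1}$, whence $\dim C_{x_0}=\dim C_x$.

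The crux, and the step I expect to be the main obstacle, is exactly this conjugacy claim: that the two reductive subgroups $H$ and $g^{-1}Hg$ of the reductive group $G_x$, which are $G$-conjugate by $g$, are already conjugate \emph{inside} $G_x$. This is not a formal consequence of $G$-conjugacy, and resolving it is where the real content of Luna's criterion sits. I would handle it by invoking the \'etale slice theorem, which produces a $G_x$-stable slice at $x_0$ identifying the closed orbits near $x_0$ with $G_x$-orbit data and thereby forcing the two embeddings of $H$ to be $G_x$-conjugate; alternatively one bypasses the pointwise argument entirely by proving that the induced morphism of GIT quotients $W/\!/C\to V/\!/G$ is finite, after which closedness of $C\cdot x$ in $W$ and of $G\cdot x$ in $V$ correspond fiber by fiber and both implications follow at once. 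The $1$-PS approach above makes clear that all other steps are routine consequences of Kempf--Birkes and Matsushima, so the entire difficulty is concentrated in this conjugacy/finiteness input.
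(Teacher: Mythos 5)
The paper does not actually prove this lemma: it is quoted as Luna's criterion with a citation to \cite{Luna}, so there is no in-paper argument to compare against and your proposal must stand on its own. Your reverse implication is essentially sound: Kempf's instability theory does place $G_x$ (hence $H$) in the optimal parabolic, a reductive subgroup of a parabolic can be conjugated by the unipotent radical into the Levi $C_G(\mu)$ of a representative $\mu$ of the optimal class, and so a destabilizing one-parameter subgroup can be chosen inside $C_G(H)$; its limit then witnesses non-closedness of $C\cdot x$ in $V^H$. That is a legitimate (post-Luna) proof of that direction.

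The forward implication, however, contains a genuine gap, and the specific repair you propose for it would fail. The conjugacy claim you reduce to --- that $H$ and $g^{-1}Hg$, being reductive subgroups of the reductive group $G_x$ which are conjugate in $G$, are already conjugate inside $G_x$ (equivalently, that $G\cdot x\cap V^H$ is a single $N_G(H)$-orbit when $G\cdot x$ is closed) --- is false as a general statement. Take $G=SO_3(\mathbb{C})$ acting by conjugation on traceless symmetric $3\times 3$ matrices and $x=\mathrm{diag}(1,2,-3)$: the orbit is closed, $G_x$ is the Klein four-group of diagonal sign matrices of determinant $1$, and its three order-two subgroups are conjugate in $G$ but not in the abelian group $G_x$; correspondingly, for $H$ one of them, $G\cdot x\cap V^H$ consists of three distinct $N_G(H)$-orbits (distinguished by which eigenvalue occupies the $(3,3)$ corner). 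So the \'etale slice theorem cannot ``force the two embeddings of $H$ to be $G_x$-conjugate,'' and the step $\dim C_{x_0}=\dim C_x$ cannot be obtained this way. The statement you need is available only in the situation you are trying to rule out, i.e.\ only by already knowing the theorem. Your alternative route --- finiteness of $V^H/\!/N_G(H)\to V/\!/G$, from which the criterion does follow --- is indeed Luna's actual theorem, but it is precisely the hard content of \cite{Luna} and is only named, not proved. As written, the forward implication is not established.
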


\begin{prop}
If $S$ is of Type $(\xi)$, it is strictly semistable with closed orbits.
\end{prop}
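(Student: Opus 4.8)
The plan is to apply Luna's criterion (Lemma~\ref{luna}) to the one--dimensional reductive subgroup $H:=\lambda_{d-1,1}(\mathbb{G}_m)\subset G$. The starting observation is that every monomial occurring in a Type $(\xi)$ form has $\lambda_{d-1,1}$-weight zero: for $x_1^kx_2^{d-2k}x_3^k$ one has $a_0=a_4=0$ and $a_1=a_3=k$, so $W_{\lambda_{d-1,1}}=k-k=0$. Hence, writing $\hat f\in V$ for an affine lift of the Type $(\xi)$ point, $H$ fixes $\hat f$ pointwise, i.e. $\hat f\in V^H$. In particular the stabilizer of $\hat f$ contains the positive--dimensional torus $H$, so $S$ cannot be properly stable; equivalently $\mu(\lambda_{d-1,1},f)=0$ violates the strict inequality required for stability. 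It therefore remains to prove that $G\cdot\hat f$ is closed in $V$, which by Luna's criterion is equivalent to $C_G(H)\cdot\hat f$ being closed in $V^H$.

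Next I would compute the centralizer $C_G(H)$. Since $d\ge 3$, the five weights $d-1,\,1,\,0,\,-1,\,1-d$ of $H$ on $\CC^5$ are pairwise distinct, so every element of $GL(5,\CC)$ commuting with $H$ must preserve each (one--dimensional) weight line and hence be diagonal. Intersecting the diagonal torus with $G=SO(5,\CC)$, a diagonal matrix $\mathrm{diag}(s_0,s_1,s_2,s_3,s_4)$ preserves $x_0x_4+x_1x_3+x_2^2$ iff $s_0s_4=s_1s_3=1$ and $s_2^2=1$, while the condition $\det=s_2=1$ forces $s_2=1$. Thus
\[
C_G(H)=\{\mathrm{diag}(s_1,s_2,1,s_2^{-1},s_1^{-1}) : s_1,s_2\in\mathbb{G}_m\}=T,
\]
the maximal torus of $G$.

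Finally I would check the action of $T$ on $\hat f$. A maximal torus element scales the monomial $x_0^{a_0}\cdots x_4^{a_4}$ by $s_1^{a_0-a_4}s_2^{a_1-a_3}$, and for every Type $(\xi)$ monomial $a_0=a_4=0$ and $a_1-a_3=0$, so this factor is $1$. Hence $T$ fixes $\hat f$, and $C_G(H)\cdot\hat f=\{\hat f\}$ is a single point, trivially closed in $V^H$. By Lemma~\ref{luna} the orbit $G\cdot\hat f$ is therefore closed in $V$. Since $\hat f\neq 0$ and the origin is $G$-fixed, a closed orbit cannot contain it, so $0\notin\overline{G\cdot\hat f}=G\cdot\hat f$ and $S$ is semistable with a closed (minimal) orbit; combined with the non-stability noted above, $S$ is strictly semistable with closed orbit.

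The only genuinely computational point is the centralizer calculation, whose one real input is the hypothesis $d\ge3$ guaranteeing that $H$ has distinct weights, hence a torus centralizer. I expect the main obstacle to be the bookkeeping needed to describe $V^H$ correctly and, in particular, to recognize that $T$ acts trivially on precisely the span of the Type $(\xi)$ monomials: indeed $T$ does move the remaining weight--zero monomials $x_0x_3^{d-1}$ and $x_1^{d-1}x_4$, which is exactly why specializing to Type $(\xi)$, rather than to the full $\lambda_{d-1,1}$-invariant part, yields a closed orbit. Once this is in hand, Luna's criterion finishes the argument with no further estimates.
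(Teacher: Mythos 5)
Your proof is correct and follows the same skeleton as the paper's: both apply Luna's criterion to the one-parameter subgroup $H=\lambda_{d-1,1}(\mathbb{G}_m)$, identify $C_G(H)$ with the maximal torus $T=\{\mathrm{diag}(s_0,s_1,1,s_1^{-1},s_0^{-1})\}$ of $SO(5)$ using that the weights $d-1,1,0,-1,1-d$ are distinct for $d\ge 3$, and reduce to the $T$-action on $V^H$. You differ in two worthwhile ways. First, where the paper merely asserts it is ``straightforward to see'' that a Type $(\xi)$ element is semistable with closed $T$-orbit in $V^H$, you supply the reason: every Type $(\xi)$ monomial $x_1^kx_2^{d-2k}x_3^k$ has $a_0=a_4=0$ and $a_1=a_3$, so $T$ fixes the lift $\hat f$ outright and $C_G(H)\cdot\hat f$ is a single point, trivially closed; you also correctly note that $T$ moves the remaining weight-zero monomials $x_0x_3^{d-1}$ and $x_1^{d-1}x_4$, which explains why one must specialize to Type $(\xi)$ rather than to all of $V^H$. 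Second, the paper invokes the Kempf--Morrison criterion as a separate input for semistability, whereas you obtain it for free from closedness: a closed orbit of a nonzero vector under a linear action cannot contain the origin, so $0\notin\overline{G\cdot\hat f}$. Together with the observation that the positive-dimensional stabilizer (equivalently $\mu(\lambda_{d-1,1},f)=0$) rules out proper stability, this makes your argument self-contained modulo Luna's criterion, and slightly leaner than the paper's.
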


\begin{proof}
The stabilizer of Type $(\xi)$ contains a 1-PS:
\begin{equation*}
H=\{diag(t^{d-1},t,1,t^{-1},t^{-d+1)})|t\in \mathbb C^*\},
\end{equation*}
of distinct weights. The semi-stability is obtained by using the Kempf-Morrison criterion (Proposition 2.4 in \cite{AFS}). And the centralizer
\begin{equation*}
C_G(H)=\{diag(a_0,a_1,1,a_1^{-1},a_0^{-1})\}\subset SO(5)
\end{equation*}
is a maximal torus. It acts on the fixed point set
\[V^H=\langle x_0x_3^{d-1},x_1^{d-1}x_4,x_2^d, x_1x_2^{d-2}x_3,\dots,x_1^{c}x_3^{c}\,\, ({\rm resp}.\, x_1^{c}x_2x_3^{c}) \rangle \subset V\]
where $d=2c$ (resp. $d=2c+1$). It is straightforward to see any element of Type $(\xi)$ is semistable with closed orbit in $V^H$ under the action. Then the proof follows from Luna's criterion.
\end{proof}

Suppose $f(x_1, x_2, x_3)$ is of Type $(\xi)$ and $g(x_0, x_1, x_2, x_3, x_4)$ is in $M_{<0}(\lambda_{u,v})$ for some 1-PS $\lambda_{u, v}$. Then $S=Q\cap Y$, where the equation of $Y$ is
\[f(x_1, x_2, x_3)+g(x_0, x_1, x_2, x_3, x_4),\]
is strictly semistable but the orbit is not closed. It degenerates to Type $(\xi)$.

\medskip

\begin{exm}
Let $S=Q\cap Y$ in $\mathbb{P}^4$ such that $Y$ is defined by the equation
\[ x_0x_3^3 +x_1x_2^2x_3=0. \]
Then $S$ is semistable by the Kempf-Morrison criterion (Proposition 2.4 in \cite{AFS}) and Proposition 2.12.

Now let $\chi(t)=diag(t^3,t^3,t^{-2},t^{-2},t^{-2})$ be a 1-PS of $SL(5)$. Then $\mu(Q,\chi)=\max\{3-2,3-2,-4\}=1$ and $\mu(Y,\chi)=\max\{ 3-6, 3-4-2\}= -3$. By Theorem 1.5 in \cite{Ferretti} (cf. Theorem 1.1 in \cite{Sano}), $\mu(S,\chi)=deg(Y)\mu(Q,\chi)+deg(Q)\mu(Y,\chi)=-2<0$. So $S$ is Chow unstable.

\end{exm}

\begin{rmk}
Consider the same surface $S$ in Example 2.12. $S$ is semistable. But $S$ has singularities along the line $x_2=x_3=x_4=0$ whose general point on the line is not normal crossing. So $S$ is not sem-log canonical by the classification of the semi-log canonical surface singularities (Theorem 4.24 in \cite{KSB}).
\end{rmk}

\medskip

{\em Acknowledgements}. The authors thank the referee for pointing out several mistakes in the original version of the paper. This work was supported by Basic Science Research Program through the National Research Foundation of Korea funded by the Korea government(MSIP)(No.2013006431). Also the second named author was supported by the National Research Foundation of Korea funded by the Korea government(MSIP)(No.2013042157).

\bigskip

\begin{small}\end{small}


\begin{thebibliography}{PPS}

\bibitem{Alexeev}
V. Alexeev, \emph{Boundness and $K^2$ for log surfaces}, Internat. J. Math.
\textbf{5} (1994), 779--810.

\bibitem{AFS}
J. Alper, M. Fedorchuk and D. Smyth, \emph{Finite Hilbert stability of (bi)canonical curves}, Invent. Math.  191
\textbf{191} (2013), 671–-718.

\bibitem{Ferretti}
R. G. Ferretti, \emph{Diophantine approximations and toric deformations}, Duke Math. J. \textbf{118} (2003), 493--522.

\bibitem{Gallardo}
P. Gallardo, \emph{On the GIT quotient of quintic surfaces}, math.AG arXiv:1310.3534v1, 2013.

\bibitem{Gieseker}
D. Gieseker, \emph{Global moduli for surfaces of general type}, Invent. Math.
\textbf{43} (1977), 233--282.

\bibitem{Kollar}
J. Koll\'ar, \emph{Singularities of pairs}, Algebraic Geometry, Santa Cruz 1995, 221-2-87, Proc.
Sympos. Pure Math., 62, Part 1, Amer. Math. Soc., 1997.

\bibitem{KM}
J. Koll\'ar and S. Mori, Birational Geometry of Algebraic Varieties,
\textbf{134} Cambridge Tracts in Mathematics, 1998.

\bibitem{KSB}
J. Koll\'ar and N. I. Shepherd-Barron, \emph{Threefolds and deformations of surface singularities}, Invent. Math.
\textbf{91} (1988) 299--338.

\bibitem{LiTian}
Z. Li and Z. Tian, \emph{Picard groups of moduli space of low degree K3 surfaces}, math.AG arXiv:1304.3219v1, 2013.

\bibitem{Luna}
D. Luna, \emph{Adh\'{e}rences d'orbite et invariants}, Invent. Math. \textbf{29} (1975), 231--238.

\bibitem{Mabuchi}
T. Mabuchi, \emph{Chow-stability and Hilbert-stability in Mumford's geometric invariant theory}, Osaka J. Math. \textbf{45} (2008), 833--846.

\bibitem{Mumford}
D. Mumford, \emph{Stability of projective varieties}, L'Ens. Math. \textbf{23} (1977), 39--110.

\bibitem{GIT}
D. Mumford, J. Fogarty, F. Kirwan, Geometric Invariant Theory, 3rd ed, Ergebnisse der Mathematik und ihrer Grenzgebiete 34, Springer-Verlag, Berlin, 1994.

\bibitem{Sano}
Y. Sano, \emph{On stability criterion of complete intersections}, J. Geom. Anal. \textbf{14} (2004), 533--544.





\end{thebibliography}
\end{document}